\newcounter{num}[section] %
\newenvironment{theo}{\refstepcounter{num}%
\bigskip\noindent{\bf Theorem~\arabic{section}.\arabic{num}. }\it}
\newenvironment{prop}{\refstepcounter{num}%
\bigskip\noindent{\bf Proposition~\arabic{section}.\arabic{num}. }\it}
\newenvironment{conv}{\refstepcounter{num}%
\bigskip\noindent{\bf Convention~\arabic{section}.\arabic{num}. }\it}
\newenvironment{cor}{\refstepcounter{num}%
\bigskip\noindent{\bf Corollary~\arabic{section}.\arabic{num}. }\it}
\newenvironment{lemma}{\refstepcounter{num}%
\bigskip\noindent{\bf Lemma~\arabic{section}.\arabic{num}. }\it}
\newenvironment{example}{\refstepcounter{num}%
\bigskip\noindent{\bf Example~\arabic{section}.\arabic{num}.}}
\newenvironment{remark}{\refstepcounter{num}%
\bigskip\noindent{\bf Remark~\arabic{section}.\arabic{num}.}}
\newenvironment{conj}{\refstepcounter{num}%
\bigskip\noindent{\bf Conjecture~\arabic{section}.\arabic{num}.}}
\newenvironment{defin}{\refstepcounter{num}%
\bigskip\noindent{\bf Definition~\arabic{section}.\arabic{num}.}}
\newenvironment{proof_of}[1]{\medskip\noindent{\bf Proof #1}}
{$\Box$ \bigskip}
\newenvironment{eq}{\begin{equation}}{\end{equation}}
\renewcommand{\Ref}[1]{(\ref{#1})}
\newcommand{\Char}{\mathop{\rm char}}
\newcommand{\id}[1]{{{\rm id}\{{#1}\}}}
\newcommand{\FF}{\mathbb{F}}
\newcommand{\KK}{\mathbb{K}}
\newcommand{\ZZ}{\mathbb{Z}}
\newcommand{\NN}{\mathbb{N}}
\newcommand{\tq}{ \ | \ }
\newcommand{\FX}{\FF\langle X\rangle}
\newcommand{\Fxy}{\FF\langle x,y\rangle}
\newcommand{\Bxy}{\B\langle x,y\rangle}
\newcommand{\A}{\mathsf{A}}
\newcommand{\B}{\mathsf{B}}
\newcommand{\C}{\mathsf{C}}
\newcommand{\G}{\mathsf{G}}
\newcommand{\R}{\mathsf{R}}
\newcommand{\AW}{\mathcal{A}}
\newcommand{\algA}{\mathcal{A}}
\newcommand{\algB}{\mathcal{B}}
\newcommand{\algC}{\mathcal{C}}
\newcommand{\La}{\mathcal{L}}
\newcommand{\LA}{\langle}
\newcommand{\RA}{\rangle}
\newcommand{\eqPI}{\sim_{\rm PI}}
\newcommand{\M}{\widetilde{M}}
\newcommand{\y}{\widehat{y}}
\newcommand{\D}[2]{\xi_{#1,#2}}     
\newcommand{\Diff}{\mathfrak{D}}
\newcommand{\pa}{\partial}
\newcommand{\si}{\sigma}
\newcommand{\al}{\alpha}
\newcommand{\be}{\beta}
\newcommand{\ga}{\gamma}
\newcommand{\de}{\delta}
\newcommand{\Id}[1]{{{\rm Id}({#1})}}
\newcommand{\IdK}[1]{{{\rm Id}_{\KK}({#1})}}
\newcommand{\ov}[1]{\overline{#1}}
\newcommand{\un}[1]{{\underline{#1}} }
\newcommand{\alg}{\mathop{\rm alg}}
\newcommand{\mdeg}{\mathop{\rm mdeg}}
\begin{document}
\title[Identities for a parametric Weyl algebra]{Identities for a parametric Weyl algebra over a ring}


\thanks{The first author was supported by CNPq 313358/2017-6, FAEPEX 2054/19 and FAEPEX 2655/19}

\author{Artem Lopatin}
\address{Artem Lopatin\\ 
State University of Campinas, 651 Sergio Buarque de Holanda, 13083-859 Campinas, SP, Brazil}
\email{dr.artem.lopatin@gmail.com (Artem Lopatin)}

\author{Carlos Arturo Rodriguez Palma}
\address{Carlos Arturo Rodriguez Palma\\ 
State University of Campinas, 651 Sergio Buarque de Holanda, 13083-859 Campinas, SP, Brazil; Industrial University of Santander, Cl.~9 \#Cra 27, Ciudad Universit\'aria, 
Bucaramanga, Santander, Colombia}
\email{carpal1878@gmail.com (Carlos Arturo Rodriguez Palma)}

\begin{abstract} 
In 2013 Benkart, Lopes and Ondrus introduced and studied in a series of papers the infinite-dimensional unital associative algebra $\A_h$ generated by elements $x,y,$ which satisfy the relation $yx-xy=h$ for some $0\neq h\in \FF[x]$. We generalize this construction to $\A_h(\B)$ by working over the fixed  $\FF$-algebra $\B$ instead of $\FF$. We describe the polynomial identities for $\A_h(\B)$ over the  infinite field $\FF$ in case $h\in\B[x]$ satisfies certain restrictions.

\noindent{\bf Keywords: } polynomial identities, matrix identities, Weyl algebra, Ore extensions, positive characteristic.

\noindent{\bf 2020 MSC: } 16R10; 16S32.
\end{abstract}

\maketitle

\section{Introduction}

Assume that $\FF$ is a field of arbitrary characteristic $p=\Char\FF\geq0$. All vector spaces and algebras are over $\FF$  and all algebras are associative, unless other\-wise is stated. For the fixed $\FF$-algebra $\B$ with unity we write $\B\LA x_1,\ldots,x_m\RA$ for the $\FF$-algebra of non-commutative $\B$-polynomials in variables $x_1,\ldots,x_m$, i.e., $\B\LA x_1,\ldots,x_m\RA$ is a free left (and a free right) $\B$-module with the basis given by the set of all non-commutative monomials in $x_1,\ldots,x_m$, where we assume that $\be x_i = x_i \be$ for all $\be\in \B$ and $1\leq i\leq m$. The unity $1$ of $\B\LA x_1,\ldots,x_m\RA$ corresponds to the empty monomial. In case the variables are $x_1,x_2,\dotsc$ the algebra of non-commutative $\B$-polynomials is denoted by  $\B\LA X\RA$. Similarly, we define the algebra of commutative $\B$-polynomials $\B[x_1,\ldots,x_m]$ as a free left (and a free right) $\B$-module with the basis given by the set of all monomials in $x_1^{i_1}\cdots x_m^{i_m}$ with $i_1,\ldots,i_m\geq0$, where we assume that $\be x_i = x_i \be$ and $x_i x_j=x_j x_i$ for all $\be\in \B$ and $1\leq i,j\leq m$. Note that $\B\LA x\RA = \B[x]$.

\subsection{Parametric Weyl algebra $\A_h(\B)$ as the Ore extension}

We study the polynomial identities for the following family of infinite-dimensional unital algebras $\A_h(\B)$, which are parametrized by a polynomial $h$ from the center of $\B[x]$:  

\begin{defin}\label{Def Ah}
For $h\in Z(\B)[x]$, the {\it parametric Weyl algebra} $\A_h(\B)$ {\it over the ring} $\B$ is the unital associative algebra over $\FF$ generated by $\B$ and letters $x$, $y$ commuting with $\B$ subject to the defining  relation $yx=xy+h$ (equivalently, $[y,x]=h$, where $[y,x]=yx-xy$), i.e., $$\A_h(\B)=\Bxy/\id{yx-xy-h}.$$
\end{defin}
\medskip

For short, we denote $\A_h=\A_h(\FF)$. The partial cases of the given construction are the Weyl algebra $\A_1$, the polynomial algebra $\A_0=\FF[x,y]$, and the universal enveloping algebra $\A_x$ of the two-dimensional nonabelian Lie algebra. For $h\in \FF[x]$, the following isomorphism of $\FF$-algebras holds:
\begin{eq}\label{eq_tensor}
\A_h(\B) \simeq \B \otimes_{\FF} \A_h.
\end{eq}%
Note that in general the isomorphism~\Ref{eq_tensor} does not hold because $A_h$ is not well-defined in case $h\not\in\FF[x]$. Given a polynomial  $f= \eta_d x^d + \eta_{d-1} x^{d-1} + \cdots + \eta_0$ of $\B[x]$ with $d\geq0$, we say that  $\eta_d$ is the {\it leading coefficient} of $f$ and the product $\eta_d x^d$ is the {\it leading term} of $f$. 

The algebra $\A_h$  was introduced and studied by Benkart, Lopes, Ondrus~\cite{Benkart_Lopes_Ondrus_I, Benkart_Lopes_Ondrus_II, Benkart_Lopes_Ondrus_III} as a natural object in the theory of Ore extensions. In particular, they determined automorphisms of $\A_h$ over an arbitrary field $\FF$ and the invariants of $\A_h$ under the automorphisms,  completely described the simple modules and derivations of $\A_h$ over any field. Then Lopes and Solotar~\cite{Lopes_Solotar_2019} described the Hochschild cohomology ${\rm HH}^{\bullet}(\A_h)$ over a field of arbitrary characteristic. Over an algebraically closed field of zero characteristic simple $\A_h$-modules were independently classified by Bavula~\cite{Bavula_2020}. In recent preprints~\cite{Bavula_2021_zero},~\cite{Bavula_2021_prime} Bavula continued the study of the automorphism group of $\A_h$.  

Let us recall that an Ore extension of $R$ (or, equivalently, a skew polynomial ring over $R$) $A=\R[y,\sigma,\delta]$ is given by a unital associative (not necessarily commutative) algebra $\R$ over a field $\FF$, an $\FF$-algebra endomorphism $\sigma:\R\rightarrow \R$, and a $\sigma$-derivation $\delta:\R\rightarrow \R$, i.e., $\delta$ is $\FF$-linear map and $\delta(ab)=\delta(a)b+\sigma(a)\delta(b)$ for all $a,b\in \R$. Then $A=\R[y,\sigma,\delta]$ is the unital algebra generated by $y$ over $\R$ subject to the relation $$ya=\sigma(a)y+\delta(a) \quad \text{ for all } a\in \R.$$ 

Assume that $\R=\B[x]$, $\sigma={\rm id}_{\R}$ is the identity automorphism on $\R$, and $\delta:\R\rightarrow \R$ is given by $\delta(a)=a'h$ for all $a\in \R$, where $a'$ stands for the usual derivative of a $\B$-polynomial $a$ with respect to the variable $x$. Since $h\in Z(\R)$, $\de$ is a derivation of $\R$. Using the linearity of derivative and induction on the degree of $a\in\B[x]$ it is easy to see that
\begin{eq}
[y,a]=a'h \text{ holds in }\A_h(\B) \text{ for all }a\in \B[x].
\end{eq}%
\noindent{}Hence $\A_h(\B)=\R[y,\sigma,\delta]$ is an Ore extension. The following lemma is a corollary of Observation 2.1 from~\cite{AVV} proven by Awami, Van den Bergh and Van Oystaeyen (see also Proposition 3.2 of \cite{AD2} and Lemma 2.2 of \cite{Benkart_Lopes_Ondrus_I}).

\begin{lemma} Assume that $A=\R[y,\si,\de]$ is an Ore extension of $\R=\FF[x]$, where $\si$ is an automorphism of $\R$. Then $A$ is isomorphic to one of the following algebras:
\begin{enumerate}
\item[$\bullet$] a quantum plane, i.e., $A\simeq\Fxy/\id{yx-q xy}$ for some $q\in\FF^{*}=\FF\backslash \{0\}$;

\item[$\bullet$] a quantized Weyl algebra, i.e., $A\simeq\Fxy/\id{yx-q xy-1}$ for some $q\in\FF^{*}$;

\item[$\bullet$] an algebra $\A_h$ for some $h\in\FF[x]$.
\end{enumerate}
\end{lemma}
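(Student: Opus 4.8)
The plan is to exploit the fact that both the twist $\si$ and the $\si$-derivation $\de$ of $\R=\FF[x]$ carry very little data, and then to normalise that data by changing generators. Since $\si$ is an $\FF$-algebra automorphism of $\FF[x]$, it must be affine: $\si(x)=qx+b$ with $q\in\FF^{*}$ and $b\in\FF$ (an endomorphism $x\mapsto p(x)$ has image $\FF[p(x)]$, which equals $\FF[x]$ only when $\deg p=1$, since for $\deg p\geq2$ no element of degree $1$ lies in $\FF[p(x)]$). A $\si$-derivation is $\FF$-linear and obeys the twisted Leibniz rule, so it is completely determined by the single polynomial $f:=\de(x)$, via $\de(x^n)=\big(\sum_{i=0}^{n-1}\si(x)^i x^{\,n-1-i}\big)f$. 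Thus the pair $(\si,\de)$ is encoded by $(q,b,f)$.

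The two admissible moves are: (i) replacing $x$ by an affine coordinate $\tilde x=\la x+\mu$ with $\la\in\FF^{*}$, which keeps $\R=\FF[\tilde x]$ and conjugates $\si$; and (ii) replacing $y$ by $y'=uy+g(x)$ with $u\in\FF^{*}$, $g\in\FF[x]$. I would first check that each move sends the Ore extension to an Ore extension of the same shape: using the $\FF$-basis $\{x^iy^n\}$ of $A$ one sees that $\{x,y'\}$ still generate $A$, that $\si$ is unchanged by move (ii), and that $\de$ is replaced by $a\mapsto u\de(a)+\big(a-\si(a)\big)g$, the extra term being the inner $\si$-derivation attached to $g$ (here I use that $\R$ is commutative). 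Hence move (ii) lets me modify only $\de$ while fixing $\si$.

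Now I split into cases on $\si$. If $q=1$ and $b=0$ then $\si={\rm id}$, $\de$ is an ordinary derivation $\de(a)=a'f$, and the defining relation is exactly $[y,x]=f$, so $A\cong\A_f$. If $q\neq1$, then $\si(x)=qx+b$ has the fixed point $x_0=b/(1-q)\in\FF$; after the shift $\tilde x=x-x_0$ we may assume $\si(x)=qx$. Writing $\de(x)=\sum_i a_i x^i$ and applying move (ii) with $u=1$ and a suitable $g$, the term $(x-\si(x))g=(1-q)xg$ cancels every $a_ix^i$ with $i\geq1$, leaving $\de(x)=a_0$ constant. The relation becomes $y'x=qxy'+a_0$: if $a_0=0$ this is the quantum plane, and if $a_0\neq0$ rescaling $y''=a_0^{-1}y'$ gives $y''x-qxy''=1$, a quantized Weyl algebra.

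The remaining case $q=1$, $b\neq0$ is the one requiring care. Rescaling $x$ we may take $\si(x)=x+1$; then move (ii) with $g=\de(x)$ and the identity $(x-\si(x))g=-g$ kills the derivation entirely, reducing the relation to $y'x=(x+1)y'$, i.e.\ $[y',x]=y'$. This is not yet in $\A_h$ form, since the right-hand side is a polynomial in $y'$ rather than in $x$; the resolution is to interchange the two generators. Putting $X=y'$ and $Y=-x$ one finds $[Y,X]=X$, which is precisely the relation of $\A_x$, the enveloping algebra of the non-abelian two-dimensional Lie algebra. Hence this case is absorbed into the family $\A_h$ with $h=x$. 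The main obstacle is exactly this last identification: an automorphism with $q=1$, $b\neq0$ yields a relation that only looks new and becomes an $\A_h$ after swapping $x$ and $y$. Checking that all of these coordinate changes are genuine algebra isomorphisms, rather than merely formal manipulations, rests on the freeness of $A$ as a left $\R$-module with basis $\{y^n\}_{n\geq0}$.
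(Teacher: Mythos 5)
Your argument is correct and complete: the classification of $\FF$-algebra automorphisms of $\FF[x]$ as affine maps, the normalization of $\de$ by adding inner $\si$-derivations via $y\mapsto uy+g(x)$ and by affine changes of $x$, and the treatment of the translation case $\si(x)=x+1$ by interchanging the generators to land in $\A_x$ together cover all cases. Note that the paper does not prove this lemma at all; it quotes it as a corollary of Observation~2.1 of Awami--Van den Bergh--Van Oystaeyen (see also Alev--Dumas and Lemma~2.2 of Benkart--Lopes--Ondrus), and your proof is essentially the standard argument given in those references, so nothing further is needed.
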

\medskip

\noindent{}Note that by Theorem 9.3 of~\cite{Benkart_Lopes_Ondrus_I} the algebra $\A_h$ is not a generalized Weyl algebra over $\FF[x]$ in the sense of Bavula~\cite{Bavula_1992} in case $h\not\in\FF$.

Since the algebra of $\B$-polynomials $\B[x,y]$ is well studied, in what follows we assume that $h$ is non-zero. Moreover, we assume that the following restriction holds:

\begin{conv}\label{conv1} The leading coefficient of $h\in Z(\B)[x]$ is not a zero divisor.
\end{conv}
\medskip

\subsection{Polynomial identities}

A polynomial identity for a unital $\FF$-algebra $\algA$ is an element $f(x_1,\ldots,x_m)$ of $\FF\LA X\RA$ such that $f(a_1,\ldots,a_m)=0$ in $\algA$ for all $a_1,\ldots,a_m\in \algA$.   The set $\Id{\algA}$ of all polynomial identities for $\algA$ is a T-ideal, i.e.,  $\Id{\algA}$ is an ideal of $\FX$ such that $\phi(\Id{\algA})\subset \Id{\algA}$ for every endomorphism  $\phi$ of $\FX$.  An algebra that satisfies a nontrivial polynomial identity is called a PI-algebra. A T-ideal $I$ of $\FF\LA X\RA$ generated by $f_1,\ldots,f_k\in \FF\LA X\RA$ is the minimal T-ideal of $\FF\LA X\RA$ that contains $f_1,\ldots,f_k$. We say that $f\in \FF\LA X\RA$ follows from $f_1,\ldots,f_k$ if $f\in I$. Given a monomial $w\in \FF\LA x_1,\ldots,x_m\RA$, we write $\deg_{x_i}(w)$ for the number of letters $x_i$ in $w$ and $\mdeg(w)$ for the multidegree $(\deg_{x_1}(w),\ldots,\deg_{x_m}(w))$ of $w$. An element $f\in\FX$ is called multihomogeneous if it is a linear combination of monomials of the same multidegree. We say that algebras $\algA$, $\algB$ are called PI-equivalent and write $\algA \eqPI \algB$ if $\Id{\algA} =\Id{\algB}$.


Denote the $n^{\rm th}$ Weyl algebra by 
$$\AW_n=\FF\LA x_1,\ldots,x_n,y_1,\ldots,y_n\RA/I,$$
where the ideal $I$ is generated by $[y_i,x_j]=\de_{ij}$, $[x_i,x_j]=0$, $[y_i,y_j]=0$ for all $1\leq i,j\leq n$. Note that $\A_1=\AW_1$.

Assume that $p=0$. It is well-known that the algebra $\AW_n$ does not have nontrivial polynomial identities. Nevertheless, some subspaces of $\AW_n$ satisfy certain polynomial identities. Namely, denote by $\AW_n^{(1,1)}$ the $\FF$-span of $x_i y_j$ in $\AW_n$ for all $1\leq i,j \leq n$ and by $\AW_n^{(-,r)}$ the $\FF$-span of $a y_{j_1}\cdots y_{j_r}$ in $\AW_n$ for all $1\leq j_1,\ldots, j_r\leq n$ and $a\in \FF[x_1,\ldots,x_n]$. Dzhumadil'daev~\cite{Askar_2004, Askar_2014} studied the standard polynomial 
$${\rm St}_N(t_{1},\ldots,t_{N})=\sum_{\sigma\in S_{N}}(-1)^{\sigma}t_{\sigma(1)}\cdots t_{\sigma(N)}$$
over some subspaces of $\AW_n$. Namely, he showed that
\begin{enumerate}
\item[$\bullet$] ${\rm St}_N$ is a polynomial identity for $\AW_n^{(-,1)}$ in case $N\geq n^2 + 2n$; 

\item[$\bullet$] ${\rm St}_N$ is not a polynomial identity for $\AW_n^{(-,1)}$ in case $N< n^2 + 2n - 1$;

\item[$\bullet$] ${\rm St}_N$ is a polynomial identity for $\AW_1^{(-,r)}$ if and only if $N>2r$; 

\item[$\bullet$] the minimal degree of nontrivial identity in  $\AW_1^{(-,r)}$ is $2r+1$. 
\end{enumerate}
Using graph--theoretic combinatorial approach Dzhumadil'daev and Yeliussizov~\cite{Askar_Yeliussizov_2015} established that
\begin{enumerate}
\item[$\bullet$] ${\rm St}_{2n}$ is a polynomial identity for $\AW_n^{(1,1)}$ if and only if $n=1,2,3$. 
\end{enumerate}

Note that the space $\AW_n^{(-,1)}$ together the multiplication given by the Lie bracket is the $n^{\rm th}$ Witt algebra $W_n$, which is a simple infinitely dimensional Lie algebra. The polynomial identities for the Lie algebra $W_n$ were studied by Mishchenko~\cite{Mishchenko_1989}, Razmyslov~\cite{Razmyslov_book} and others. The well-known open conjecture claims that all polynomial identities for $W_1$ follow from the standard Lie identity
$$\sum_{\sigma\in S_{4}}(-1)^{\sigma}[[[[t_0,t_{\si(1)}]t_{\si(2)}]t_{\si(3)}]t_{\si(4)}].$$
\noindent{}$\ZZ$-graded identities for $W_1$ were described by Freitas, Koshlukov and  Krasilnikov~\cite{W1_2015}.

\subsection{Results}

In  Theorem~\ref{Teorema Principal} we prove that over an infinite field $\FF$ of positive characteristic $p$ the algebra $\A_h(\B)$ is PI-equivalent to the algebra of $p\times p$ matrices over $\B$ in case $h(\al)$ is not a zero divisor for some $\al\in Z(\B)$. On the other hand, over a finite field the similar result does not hold in case $\B=\FF$ (see Theorem~\ref{theo_finite}). As about the case of zero characteristic, in Theorem~\ref{theo0} we prove that similarly to $\A_1$, the algebra $\A_h(\B)$ does not have nontrivial polynomial identities. 

In Section~\ref{section_example} we consider the algebra $\A_{h}(\B)=\A_{\zeta}(\FF^2)$ such that $h=\zeta$ does not satisfy Convention~\ref{conv1} and the statements of Theorems \ref{theo0} and ~\ref{Teorema Principal} do not hold for $\A_{\zeta}(\FF^2)$. We describe polynomial identities for $\A_{\zeta}(\FF^2)$ and compare them with the polynomial identities for the Grassmann unital algebra of finite rank.

\section{Properties of $\A_h(\B)$}

Many properties of an Ore extension $A=\R[y,\sigma,\delta]$ are inherited from an underlying algebra $\R$. Namely, it is well-known that when $\sigma$ is an automorphism, then:

\begin{itemize} 
    \item $A$ is a free right and a free left $\R$-module with the basis $\{y^i \tq i\geq 0\}$ (see Proposition 2.3 of~\cite{K. R. Goodearl}); 
    
    \item in case $R$ is left (right, respectively) noetherian we have that  $A$ is left (right, respectively) noetherian (see Theorem 2.6 of~\cite{K. R. Goodearl});  
    
    \item in case  $\R$ is a domain we have that $A$ is a domain (see Exercise 2O~of \cite{K. R. Goodearl}).  
\end{itemize}

\noindent{}In case $\B=\FF$ the algebra $\A_h=\A_h(\B)$ is a noetherian domain, but in general case $\A_h(\B)$ lacks these properties, since $\B\subset \A_h(\B)$ (see also Example~\ref{ex2} below).  

In order to distinguish the generators for the algebras $\A_h(\B)$ and $\A_1(\B)$, we will use the following 

\begin{conv}\label{conv2} The generators of $\A_h(\B)$ are denoted by $x,\y, 1$ and the generators of $\A_1(\B)$ are denoted by $x,y,1$.
\end{conv}
\medskip

\begin{lemma}\label{lemma_basis} 
The sets  $\{x^{i}\y^{j}\tq i,j\geq 0\}$ and $\{\y^{j}x^{i}\tq i,j\geq 0\}$ are $\B$-bases for $\A_h(B)$.
\end{lemma}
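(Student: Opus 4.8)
The plan is to read off both bases directly from the Ore extension structure of $\A_h(\B)$ together with the free $\B$-module structure of $\B[x]$, avoiding any explicit manipulation of the defining relation. Recall from the discussion preceding the lemma that $\A_h(\B)=\R[\y,\si,\de]$ is an Ore extension of $\R=\B[x]$ with $\si={\rm id}_{\R}$ and $\de(a)=a'h$. Since $\si$ is an automorphism, the first of the three properties of Ore extensions recalled above (Proposition 2.3 of \cite{K. R. Goodearl}) applies: $\A_h(\B)$ is simultaneously a free left and a free right $\R$-module with basis $\{\y^j\tq j\geq0\}$.

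First I would treat the set $\{x^i\y^j\}$. Using the free left $\R$-module structure, every element of $\A_h(\B)$ is written uniquely as $\sum_j r_j\y^j$ with $r_j\in\R=\B[x]$. By definition $\B[x]$ is a free left $\B$-module with basis $\{x^i\tq i\geq0\}$, so each $r_j$ is uniquely $r_j=\sum_i\be_{ij}x^i$ with $\be_{ij}\in\B$; substituting yields a unique expression $\sum_{i,j}\be_{ij}x^i\y^j$. Uniqueness of the $\be_{ij}$ is exactly left $\B$-linear independence of $\{x^i\y^j\}$, and existence of the expression is spanning, so $\{x^i\y^j\}$ is a left $\B$-basis. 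Because the generators $x$ and $\y$ commute with $\B$, every monomial $w=x^i\y^j$ satisfies $\be w=w\be$ for all $\be\in\B$, so the left and right $\B$-actions agree on these monomials and the set is equally a right $\B$-basis.

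The set $\{\y^j x^i\}$ is handled symmetrically from the free right $\R$-module structure: every element is uniquely $\sum_j\y^j r_j$ with $r_j\in\B[x]$, and writing $r_j=\sum_i x^i\be_{ij}$ in the right $\B$-basis $\{x^i\}$ of $\B[x]$ gives the unique expression $\sum_{i,j}\y^j x^i\be_{ij}=\sum_{i,j}\be_{ij}\y^j x^i$. As before this provides both spanning and $\B$-independence on either side.

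I do not expect a serious obstacle, since the two nontrivial ingredients---freeness over $\R$ on the basis $\{\y^j\}$ and freeness of $\B[x]$ over $\B$ on the basis $\{x^i\}$---are already available. The only point requiring care is the noncommutativity of $\B$: one must invoke the free \emph{left} (respectively \emph{right}) $\R$-module property to match the free left (respectively right) $\B$-module structure of $\B[x]$, and then use $\be x=x\be$ and $\be\y=\y\be$ to identify the resulting left and right $\B$-bases. An alternative, more hands-on route would prove spanning by rewriting an arbitrary word to normal form via $\y x=x\y+h$ (a standard termination argument, since the $x\y$ summand lowers the number of pairs in which a $\y$ precedes an $x$ while the $h$-summand lowers the total number of $\y$'s) and would establish independence through the faithful representation of $\A_h(\B)$ on $\B[x]$ sending $x$ to left multiplication and $\y$ to the map $a\mapsto a'h$; but this is strictly longer and I would only fall back on it were the citation for the Ore-module basis considered unavailable.
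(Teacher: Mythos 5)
Your proposal is correct and follows essentially the same route as the paper: the paper likewise declares the spanning part obvious and deduces $\B$-linear independence from the fact that $\A_h(\B)$ is a free left and free right $\B[x]$-module with basis $\{\y^j \tq j\geq 0\}$. Your write-up merely makes explicit the combination of that freeness with the free $\B$-module structure of $\B[x]$ and the left/right bookkeeping, which the paper leaves implicit.
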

\begin{proof}
Obviously, $\A_h(\B)$ is the $\B$-span of each of the sets from the lemma. On the other hand, $\B$-linear independence of these sets follows from the fact that $\A_h(\B)$ is a free right and a free left $\B[x]$-module with the basis $\{\y^i \tq i\geq 0\}$.
\end{proof}

Introduce the following lexicographical order on $\ZZ^2$:
$(i,j)<(r,s)$ in case $j<s$ or $j=s,\;i<r$. Denote the multidegree of a monomial $w=x^i\y^j$ of $\A_h(\B)$ by $\mdeg(w)=(i,j)$. Given an arbitrary element   
$a=\sum_{i,j\geq0}\be_{ij} x^i \y^j$ of $\A_h(\B)$, where only finitely many $\be_{ij}\in\B$ are non-zero, define its {\it multidegree} $\mdeg(a)=(d_x,d_y)$ as the maximal multidegree of its momomials, i.e. as the maximal element of the set $\{(i,j) \tq \be_{ij}\neq0\}$. By Lemma~\ref{lemma_basis} the multidegree is well-defined. As above, the coefficient $\be_{d_x,d_y}$ is called the {\it leading coefficient} of $a$ and the product $\be_{d_x,d_y}\,x^{d_x} \y^{d_y}$ is called the {\it leading term} of $a$. In case $a\in\B$ we set $\mdeg(a)=(0,0)$ and the leading coefficient as well as the leading term of $a$ is $a$. 

\begin{lemma}\label{lemma_mult} 
Assume $i,j,r,s\geq 0$. Then 
\begin{enumerate}
\item[(a)] the leading term of $x^i \y^j\cdot x^r \y^s$ is $x^{i+r}\, \y^{j+s}$;

\item[(b)] in case $h\in\B$ we have
$$x^i \y^j\cdot x^r \y^s = \sum_{k=0}^{\min\{j,r\}} k! \binom{j}{k} \binom{r}{k} x^{i+r-k} h^k\, \y^{j+s-k}.$$ 
\end{enumerate}
\end{lemma}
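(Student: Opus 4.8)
The plan is to reduce both parts to normal-ordering the middle product $\y^j x^r$, since $x^i$ on the left and $\y^s$ on the right are already in standard position and merely shift degrees: explicitly
\[
x^i \y^j \cdot x^r \y^s = x^i\,(\y^j x^r)\,\y^s,
\]
so once $\y^j x^r$ is rewritten in the $\B$-basis $\{x^a\y^b\}$ of Lemma~\ref{lemma_basis}, left-multiplying by $x^i$ and right-multiplying by $\y^s$ sends each $x^a\y^b$ to $x^{i+a}\y^{b+s}$. The essential input is the commutation rule $[\y,a]=a'h$ for $a\in\B[x]$ recorded above, which specializes to $\y x^m = x^m\y + m\,x^{m-1}h$; note that $h$ always has $\y$-degree $0$ and commutes with $x$ and with $\y$ (the generators commute with $\B$, and $h\in Z(\B)[x]$).

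For part (a), valid for arbitrary $h\in Z(\B)[x]$, I would first prove by induction on $j$ that
\[
\y^j x^r = x^r \y^j + (\text{terms of }\y\text{-degree} < j).
\]
In the inductive step one writes $\y^{j+1}x^r = \y\,(x^r\y^j + \cdots)$, applies $\y x^r = x^r\y + r\,x^{r-1}h$, and uses the observation that left-multiplication by $\y$ raises the maximal $\y$-degree of any element by exactly one (since $\y\cdot x^a\y^b = x^a\y^{b+1} + a\,x^{a-1}h\,\y^b$). Hence the only contribution of $\y$-degree $j+1$ is $x^r\y^{j+1}$, with coefficient $1$. Multiplying by $x^i$ and $\y^s$ then shows that the unique monomial of maximal $\y$-degree $j+s$ in the product is $x^{i+r}\y^{j+s}$ with leading coefficient $1$; since the chosen order ranks by $\y$-degree first, this monomial is the leading term.

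For part (b), where $h\in\B$ is a central scalar, I would establish the closed form
\[
\y^j x^r = \sum_{k=0}^{\min\{j,r\}} k!\binom{j}{k}\binom{r}{k}\, x^{r-k} h^k\, \y^{j-k}
\]
by induction on $j$, again using $\y x^{r-k} = x^{r-k}\y + (r-k)x^{r-k-1}h$ and the centrality of $h$. Writing $c_k = k!\binom{j}{k}\binom{r}{k}$, the product $\y^{j+1}x^r$ splits into a ``$\y^{j-k+1}$'' sum (coefficients $c_k$) and a ``$\y^{j-k}$'' sum (coefficients $(r-k)c_k$); reindexing the latter by $k\mapsto k+1$ and matching the coefficient of $x^{r-k}h^k\y^{\,j+1-k}$ reduces the step to the two identities $\binom{j+1}{k}=\binom{j}{k}+\binom{j}{k-1}$ and $k\binom{r}{k}=(r-k+1)\binom{r}{k-1}$. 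As these are identities of integers, they survive reduction modulo $p$, so the argument is characteristic-free. Substituting the formula into $x^i(\y^j x^r)\y^s$ yields the stated expansion.

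The main obstacle is purely bookkeeping in part (b): keeping the two index shifts aligned when the single sum for $\y^{j+1}x^r$ separates into its two pieces, reindexing correctly, and confirming that the combined coefficient collapses to $k!\binom{j+1}{k}\binom{r}{k}$. No conceptual difficulty arises, since the centrality of $h$ eliminates every ordering issue among the scalar factors, and the $k=0$ term of (b) independently recovers the leading term $x^{i+r}\y^{j+s}$ of (a) in the special case $h\in\B$.
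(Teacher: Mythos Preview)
Your argument is correct and follows essentially the same route as the paper: both reduce to normal-ordering $\y^j x^r$ by induction on $j$. The paper streamlines this by first proving the single Leibniz-type identity
\[
\y^j x^r=\sum_{k=0}^{j}\binom{j}{k}\delta^k(x^r)\,\y^{j-k}
\]
(valid for arbitrary $h$), then reading off (a) from the $k=0$ term and (b) from the explicit value $\delta^k(x^r)=\frac{r!}{(r-k)!}x^{r-k}h^k$; your two separate inductions amount to the same computation unpacked.

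One slip worth flagging: your claim that $h$ ``commutes with $x$ and with $\y$'' is false for general $h\in Z(\B)[x]$, since $[\y,h]=h'h$; it holds only in the setting of (b), where $h\in\B$. This does not damage your proof of (a), because what you actually use there is merely that $x^{a-1}h\in\B[x]$ has $\y$-degree~$0$, not that $h$ can be moved past $\y$.
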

\begin{proof}
Recall that $\de(a)=a'h$ for each $a\in\B[x]$. Since $[\y,a]=\de(a)$, the induction on $j$ implies that 
\begin{eq}\label{eq1_lemma_mult}
\y^j x^r = \sum_{k=0}^{j} \binom{j}{k} \de^k(x^r) \y^{j-k}
\end{eq}%
(cf.~Lemma 5.2 of~\cite{Benkart_Lopes_Ondrus_I}). Taking $k=0$ in equality~\Ref{eq1_lemma_mult}, we obtain that the leading term of $\y^j x^r$ is $x^r\y^j$. Similarly we conclude the proof of part (a). Part (b) follows from~\Ref{eq1_lemma_mult} and 
$$\de^k(x^r) = 
\left\{
\begin{array}{cl}
\frac{r!}{(r-k)!} x^{r-k} h^k, & \text{if } k\leq r \\
0,  &  \text{if } k> r \\
\end{array}
\right..
$$
\end{proof}

\begin{lemma}\label{lemma_deg} 
If the leading coefficient of one of non-zero elements $a,b\in \A_h(\B)$ is not a zero divisor, then  $\mdeg(ab) = \mdeg(a) + \mdeg(b)$. In particular, $ab$ is not zero.
\end{lemma}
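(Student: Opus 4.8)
The plan is to write $a$ and $b$ in terms of their multidegrees and leading terms, multiply, and track the leading term of the product using part (a) of Lemma~\ref{lemma_mult}. Write $\mdeg(a)=(d_x,d_y)$ with leading coefficient $\alpha=\be_{d_x,d_y}$ and $\mdeg(b)=(e_x,e_y)$ with leading coefficient $\gamma$, so that
$$a=\alpha\, x^{d_x}\y^{d_y}+(\text{lower terms}),\qquad b=\gamma\, x^{e_x}\y^{e_y}+(\text{lower terms}),$$
where ``lower terms'' means a $\B$-combination of monomials $x^i\y^j$ with $(i,j)$ strictly below $(d_x,d_y)$ (respectively $(e_x,e_y)$) in the lexicographical order on $\ZZ^2$ introduced above.

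First I would expand $ab$ by distributivity into a sum of products of the form $\be\,x^i\y^j\cdot x^r\y^s\,\delta$, where $\be,\delta\in\B$ and we may move the scalars to the outside since $\B$ commutes with $x$ and $\y$. By Lemma~\ref{lemma_mult}(a), each such product has leading term $x^{i+r}\y^{j+s}$, so every monomial actually appearing in $x^i\y^j\cdot x^r\y^s$ has multidegree $\le(i+r,j+s)$ in the lexicographical order. The key monotonicity step is then: if $(i,j)\le(d_x,d_y)$ and $(r,s)\le(e_x,e_y)$ with at least one inequality strict, then every monomial contributed by $\be\,x^i\y^j\cdot x^r\y^s\,\delta$ has multidegree strictly below $(d_x+e_x,d_y+e_y)$. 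This reduces the computation to showing that the only term of $ab$ that can reach multidegree $(d_x+e_x,\,d_y+e_y)$ is the one coming from the two leading terms, namely $\alpha\,x^{d_x}\y^{d_y}\cdot\gamma\,x^{e_x}\y^{e_y}$, whose leading term by Lemma~\ref{lemma_mult}(a) is $\alpha\gamma\,x^{d_x+e_x}\y^{d_y+e_y}$.

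Finally I would handle the coefficient. Collecting, the coefficient of $x^{d_x+e_x}\y^{d_y+e_y}$ in $ab$ is exactly $\alpha\gamma$ (no lower term can contribute to this top monomial, by the previous step), so it remains to see that $\alpha\gamma\neq0$. This is where Convention~\ref{conv1} and the hypothesis enter: by assumption one of $\alpha,\gamma$ is not a zero divisor in $\B$, so if that factor is $\alpha$ then $\alpha\gamma=0$ would force $\gamma=0$, contradicting that $\gamma$ is a leading coefficient (hence non-zero), and symmetrically if the non-zero-divisor is $\gamma$. Therefore $\alpha\gamma\neq0$, so $\mdeg(ab)=(d_x+e_x,d_y+e_y)=\mdeg(a)+\mdeg(b)$ and in particular $ab\neq0$. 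The main obstacle I anticipate is the bookkeeping in the monotonicity step: one must verify carefully that the lexicographical order is compatible with ``addition of multidegrees'' in the sense that strict inequality in either factor survives the passage through the expansion of $x^i\y^j\cdot x^r\y^s$, whose intermediate monomials (when $h\notin\B$) need not all have the same $x$-degree; but since part (a) of Lemma~\ref{lemma_mult} pins the top monomial to $x^{i+r}\y^{j+s}$ and the order is dominated by the $\y$-degree, this compatibility goes through.
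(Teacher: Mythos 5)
Your proposal is correct and follows essentially the same route as the paper's proof: expand $a$ and $b$ over the monomial basis, use part (a) of Lemma~\ref{lemma_mult} to see that only the product of the two leading terms can contribute the monomial $x^{d_x+e_x}\y^{d_y+e_y}$, and invoke the non-zero-divisor hypothesis to conclude that its coefficient $\alpha\gamma$ is non-zero. The only difference is that you spell out the compatibility of the lexicographical order with addition of multidegrees (correctly, via the dominance of the $\y$-degree), a step the paper leaves implicit.
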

\begin{proof} Consider $a=\sum_{i=1}^m \be_{i} x^{r_i} \y^{s_i}$ and $b=\sum_{j=1}^n \ga_{j} x^{k_j} \y^{l_j}$ for $m,n\geq1$ and non-zero $\be_i,\ga_j\in\B$, where we assume that elements of each of the sets $\{(r_i,s_i)\tq 1\leq i\leq m\}$ and  $\{(k_j,l_j)\tq 1\leq j\leq n\}$ are pairwise different. Assume that $\mdeg(a)=(r_1,s_1)$ and $\mdeg(b)=(k_1,l_1)$. Part (a) of  Lemma~\ref{lemma_mult} implies that 
$$\mdeg(x^{r_1} \y^{s_1}  x^{k_1} \y^{l_1})=(r_1\!+\!k_1, s_1\!+\!l_1) \;\text{ and }\; \mdeg(x^{r_i} \y^{s_i}  x^{k_j} \y^{l_j})<(r_1\!+\!k_1, s_1\!+\!l_1)$$ if $(i,j)\neq (1,1)$. Since $\be_1 \ga_1\neq 0$, we obtain $\mdeg(ab) = (r_1+k_1, s_1 + l_1)$ and the proof is concluded. 
\end{proof}

\begin{lemma}\label{lemma_embedding} 
\begin{enumerate}
\item[(a)] The $\B$-linear homomorphism  of $\FF$-algebras  $\psi: \A_h(\B)\to \A_1(\B)$, defined by 
$$1\to 1,\quad x\to x,\quad \y\to y h,$$
is an embedding $\A_h(\B)\subset \A_1(\B)$. 

\item[(b)] $\{x^{i}h^{j}y^{j}\tq i,j\geq 0\}$ and $\{y^{j}h^{j}x^{i}\tq i,j\geq 0\}$ are $\B$-bases for $\A_h(\B)\subset \A_1(\B)$.
\end{enumerate}
\end{lemma}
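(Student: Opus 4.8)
The plan is first to check that $\psi$ is a well-defined $\FF$-algebra homomorphism and then to prove injectivity by a leading-term argument. Since $\A_h(\B)=\Bxy/\id{\y x-x\y-h}$, to define $\psi$ it suffices to declare $\be\mapsto\be$ for $\be\in\B$, $x\mapsto x$, $\y\mapsto yh$, and to verify that the defining relation maps to zero in $\A_1(\B)$. Using that the coefficients of $h$ lie in $Z(\B)$, so $hx=xh$, together with $yx=xy+1$ in $\A_1(\B)$, I compute $(yh)x-x(yh)=y(xh)-xyh=(yx)h-xyh=(xy+1)h-xyh=h$, whence $\psi(\y)\psi(x)-\psi(x)\psi(\y)-h=0$ and $\psi$ is well-defined. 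For injectivity I work with the multidegree introduced above, now in $\A_1(\B)$. Writing $d=\deg h$ and letting $\eta_d\in Z(\B)$ be the leading coefficient of $h$ (not a zero divisor by Convention~\ref{conv1}), Lemma~\ref{lemma_deg} gives $\mdeg(yh)=(d,1)$ with leading coefficient $\eta_d$, and hence $\mdeg\big(x^i(yh)^j\big)=(i+jd,j)$ with leading coefficient $\eta_d^{\,j}$, again not a zero divisor. Since $(i,j)\mapsto(i+jd,j)$ is injective, the elements $\psi(x^i\y^j)=x^i(yh)^j$ have pairwise distinct multidegrees; thus for nonzero $a=\sum_{i,j}\be_{ij}x^i\y^j$ the leading term of $\psi(a)$ is $\be_{i_0j_0}\eta_d^{\,j_0}x^{i_0+j_0d}y^{j_0}$ at the index maximizing $(i+jd,j)$, which is nonzero because $\eta_d^{\,j_0}$ is central and not a zero divisor. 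This proves (a).

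For (b) set $W=\psi(\A_h(\B))$; by (a) it is a free left $\B$-module on $\{x^i(yh)^j\}$, and since $x,\B\subset W$ it is closed under left multiplication by $\B[x]$. Put $V=\B\text{-span}\{x^ih^jy^j\tq i,j\ge0\}$. The $\B$-linear independence of $\{x^ih^jy^j\}$ follows exactly as in (a), since each has multidegree $(i+jd,j)$ and leading coefficient $\eta_d^{\,j}$, these multidegrees are distinct, and $\eta_d^{\,j}$ is not a zero divisor. It remains to prove $V=W$, for which the crucial step is the expansion
$$(yh)^j=\sum_{b=0}^{j}h^b g_{j,b}\,y^b,\qquad g_{j,b}\in\B[x],\quad g_{j,j}=1,$$
which I establish by induction on $j$ using $[y,P]=P'$ for $P\in\B[x]$ in $\A_1(\B)$: multiplying the expansion for $j-1$ on the left by $yh$ and commuting $y$ past each $h^{b+1}g_{j-1,b}$ produces the term $(h^{b+1}g_{j-1,b})'$, which is again divisible by $h^b$, so the shape of the expansion — and in particular $g_{j,j}=1$ — is preserved.

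From this expansion $x^i(yh)^j=\sum_b h^b(x^ig_{j,b})y^b\in V$, since the coefficients of $h^b$ are central and may be moved to the left, giving $W\subseteq V$. For the reverse inclusion I rewrite the expansion as $h^jy^j=(yh)^j-\sum_{b<j}g_{j,b}h^by^b$ (using centrality to swap $h^b$ past $g_{j,b}$) and induct on $j$: the right-hand side lies in $W$ because $(yh)^j\in W$ and, by the inductive hypothesis, $g_{j,b}h^by^b=g_{j,b}(h^by^b)\in W$ as $W$ is a left $\B[x]$-module; hence $x^ih^jy^j\in W$ and $V\subseteq W$. Thus $V=W$ and $\{x^ih^jy^j\}$ is a $\B$-basis of $\A_h(\B)$. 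The second basis is obtained by the symmetric argument, starting from the basis $\{\y^jx^i\}$ of $\A_h(\B)$ (Lemma~\ref{lemma_basis}), whose images $(yh)^jx^i$ admit the mirror expansion of $(yh)^j$ with the coefficients collected on the right of the powers of $h$. I expect the main obstacle to be precisely the bookkeeping in this expansion of $(yh)^j$: one must verify that every lower-order coefficient stays divisible by the matching power $h^b$ (so that the change of basis is simultaneously triangular for both families), and one must track left-versus-right multiplication by $\B[x]$, which is harmless only because the coefficients of $h$ are central in $\B$.
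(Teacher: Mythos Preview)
Your proof of part~(a) is correct and is essentially the paper's own argument: verify the defining relation and then use the multidegree from Lemma~\ref{lemma_deg} together with Convention~\ref{conv1} to see that $\psi$ kills no nonzero element.

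For part~(b) your argument is correct but genuinely different from the paper's. The paper does not carry out the triangular change of basis explicitly; instead it appeals to (the proof of) Lemma~3.4 of Benkart--Lopes--Ondrus to obtain directly the decomposition
\[
\A_h(\B)=\bigoplus_{j\ge 0}\B[x]\,h^{j}y^{j}=\bigoplus_{j\ge 0}y^{j}h^{j}\,\B[x],
\]
and then finishes with the same multidegree reasoning as in~(a). Your route replaces that citation by the explicit expansion $(yh)^j=\sum_{b\le j}h^{b}g_{j,b}\,y^{b}$ with $g_{j,j}=1$, proved by induction via $y\,P=P\,y+P'$; the key point, which you identify correctly, is that $(h^{b+1}g_{j-1,b})'$ is still divisible by $h^{b}$, so the triangular shape persists. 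This makes the two families $\{x^i(yh)^j\}$ and $\{x^ih^jy^j\}$ related by a unitriangular transformation over $\B[x]$, and the mirror expansion $(yh)^j=\sum_{b\le j}y^{b}h^{b}\tilde g_{j,b}$ (obtained from $P\,y=y\,P-P'$) handles the second basis in the same way. What you gain is a fully self-contained proof that does not send the reader to the literature; what the paper gains is brevity. The centrality of the coefficients of $h$, which you flag as the delicate point, is exactly what makes $h^b$ commute with every $g_{j,b}\in\B[x]$ and hence lets you freely swap $h^bg_{j,b}$ and $g_{j,b}h^b$ in both inclusions $W\subseteq V$ and $V\subseteq W$.
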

\begin{proof}
\noindent{\bf (a)} Since $\psi([\y,x]-h)=([y,x]-1)h=0$ in $\A_1(\B)$, the homomorphism $\psi$ is well-defined. Assume that $\psi$ is not an embedding, i.e., there exists non-zero finite sum $a=\sum_{i,j\geq0}\be_{ij} x^i \y^j$ with $\be_{ij}\in \B$ such that
$$
\psi(a)=\sum_{i,j\geq0}\be_{ij} x^i (y h)^j =0 \quad\text{ in }\quad\A_1(\B).
$$

Denote $\mdeg(a)=(r,s)$. If $(r,s)=(0,0)$, then $a\in \B$ and $\psi(a)=a$ is not zero; a contradiction. Therefore, $(r,s)\neq (0,0)$.
Since $\mdeg(x^i (y h)^j)= (i + j \deg(h), j)$ by Lemma~\ref{lemma_deg} and Convention~\ref{conv1}, we obtain that $\mdeg(\psi(a))= (r + s \deg(h), s)$ is not zero; a contradiction.   
\medskip

\noindent{\bf (b)} Since $h$ lies in the center of $\B[x]$, repeating the proof of Lemma 3.4 from~\cite{Benkart_Lopes_Ondrus_I} for $\A_h(\B)$ we can see that
$$\A_h(\B)=\bigoplus_{j\geq 0}\B[x]h^{j}y^{j}=\bigoplus_{j\geq 0}y^{j}h^{j}\B[x].$$
Similarly to part (a), we conclude the proof by the reasoning with multidegree.
\end{proof}

\begin{example}\label{ex2} Assume $\B$ is the $\FF$-algebra of double numbers, i.e., $\B$ has an $\FF$-basis $\{1,\zeta\}$ with $\zeta^2=0$. Then the ideal $I=\FF\text{-span}\{\zeta x^i \y^j \tq i,j\geq 0\}$ is a proper nilpotent ideal of $\A_h(\B)$. In particular, the algebra $\A_h(\B)$ is not semi-prime.
\end{example}
\medskip

\section{$\A_h(\B)$ as the algebra of differential operators}\label{section_diff}

Denote by ${\rm Map}(\B[z])$ the algebra of all $\FF$-linear maps over $\B[z]$ with respect to composition. Assume that  $\Diff_h(\B[z])$ is the subalgebra of ${\rm Map}(\B[z])$ generated by the following maps: the multiplication $\be\,{\rm Id}$ by an element $\be$ of $\B$, i.e., $(\be\,{\rm Id})(f)=\be f$, the multiplication $\chi$ by $z$, i.e., $\chi(f)=zf$, and the derivation $\de$ given by $\de(f)=f'h(z)$ for all $f\in\B[z]$. Note that $\de= h(\chi)\, \pa$, where $\pa$ stands for the operator of the usual derivative. Obviously, maps $\chi$, $h(\chi)$ and $\pa$ are $\B$-linear. For short, we write $\chi^0$ for ${\rm Id}$. 

\begin{prop}\label{prop_diff}
\begin{enumerate}
\item[(a)]  $\{\chi^{i} h(\chi)^j \pa^{j}\tq i,j\geq 0\}$ is an $\B$-basis for $\Diff_h(\B[z])$ in case $p=0$. 

\item[(b)]  $\{\chi^{i}  h(\chi)^j \pa^{j}\tq 0\leq i,\; 0\leq j<p \}$ is an $\B$-basis for $\Diff_h(\B[z])$ in case $p>0$. 

\item[(c)]$\A_h(\B) /\id{h^p y^p} \simeq \Diff_h(\B[z])$ for each $p\geq 0$.
\end{enumerate}
\end{prop}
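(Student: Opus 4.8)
The plan is to realize $\Diff_h(\B[z])$ as the image of $\A_h(\B)$ under the standard action of the Weyl algebra on $\B[z]$, and then to read off all three parts from the $\B$-basis of $\A_h(\B)\subset\A_1(\B)$ furnished by Lemma~\ref{lemma_embedding}. Concretely, I would introduce the $\FF$-algebra homomorphism $\rho:\A_1(\B)\to{\rm Map}(\B[z])$ determined by $x\mapsto\chi$, $y\mapsto\pa$ and $\be\mapsto\be\,{\rm Id}$ for $\be\in\B$; this is well defined because $[\pa,\chi]={\rm Id}$ matches $[y,x]=1$ and $\chi,\pa$ are $\B$-linear. Restricting $\rho$ to the subalgebra $\A_h(\B)$ and using the basis $\{x^i h^j y^j\}$ of Lemma~\ref{lemma_embedding}(b), one computes $\rho(x^i h^j y^j)=\chi^i h(\chi)^j\pa^j$; checking the images of the generators (here $\rho(yh)=h(\chi)\pa+h'(\chi)$, and $h'(\chi)\in\B[\chi]\subset\Diff_h(\B[z])$) shows $\rho(\A_h(\B))=\Diff_h(\B[z])$, so $\{\chi^i h(\chi)^j\pa^j\tq i,j\ge0\}$ is a $\B$-spanning set for $\Diff_h(\B[z])$. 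Parts (a)--(c) then reduce to controlling the kernel of $\rho|_{\A_h(\B)}$.

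For the linear independence underlying (a) and (b), the key is the following sublemma: if $\sum_j P_j(\chi)\pa^j=0$ as an operator on $\B[z]$ with $P_j\in\B[z]$, then $P_j=0$ for every $j$ with $j<p$ (and for all $j$ when $p=0$). I would prove it by evaluating the operator successively on $1,z,z^2,\dots$: the resulting system is triangular with diagonal entries $j!$, which are invertible in $\FF$ precisely while $j<p$. Applying this with $P_j=\big(\sum_i\be_{ij}z^i\big)h(z)^j$ and then cancelling the factor $h(z)^j$ — legitimate because by Convention~\ref{conv1} the leading coefficient of $h$, hence of $h^j$, is not a zero divisor — forces all $\be_{ij}=0$. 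When $p=0$ this proves (a). When $p>0$ one also uses $\pa^p=0$ (the operator $\pa^p$ sends $z^n$ to a multiple of $z^{n-p}$ by a product of $p$ consecutive integers, which is divisible by $p$); this annihilates every spanning element with $j\ge p$, so $\{\chi^i h(\chi)^j\pa^j\tq i\ge0,\ 0\le j<p\}$ both spans and, by the sublemma, is independent, giving (b).

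For (c) with $p>0$ I would identify $\ker(\rho|_{\A_h(\B)})$ with $\id{h^p y^p}$. The inclusion $\id{h^p y^p}\subseteq\ker\rho$ is immediate since $\rho(h^p y^p)=h(\chi)^p\pa^p=0$. For the reverse inclusion I would use that, via the case $h=1$ of \Ref{eq_tensor}, $\A_1(\B)\simeq\B\otimes_\FF\A_1$, so $y^p$ is central in $\A_1(\B)$ in characteristic $p$; hence $y^p$ commutes with $\B[x]$, and $h^j y^j=(h^p y^p)(h^{j-p}y^{j-p})$ exhibits $h^j y^j\in\id{h^p y^p}$ for all $j\ge p$. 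Consequently $\A_h(\B)/\id{h^p y^p}$ is $\B$-spanned by $\{x^i h^j y^j\tq 0\le j<p\}$, and $\rho$ carries these bijectively onto the basis of part (b); a combination mapping to $0$ must then have all coefficients zero, i.e.\ lie in $\id{h^p y^p}$. This yields $\ker(\rho|_{\A_h(\B)})=\id{h^p y^p}$ and the asserted isomorphism. For $p=0$ the sublemma gives injectivity of $\rho|_{\A_h(\B)}$ with no truncation, so $\A_h(\B)\simeq\Diff_h(\B[z])$ directly; here $\id{h^p y^p}$ is to be read as the zero ideal, since there is no relation.

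The step I expect to be the main obstacle is the linear-independence sublemma together with the cancellation of $h(z)^j$: making the triangular evaluation argument interact correctly with the coefficient ring $\B$ and with Convention~\ref{conv1} is where the genuine work lies, whereas the kernel computation in (c) is routine once the centrality of $y^p$ and the basis of part (b) are available.
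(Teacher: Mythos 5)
Your argument is correct and follows essentially the same route as the paper: the operator representation $x\mapsto\chi$, $y\mapsto\pa$ of $\A_1(\B)$ restricted to $\A_h(\B)$, the $\B$-basis from part (b) of Lemma~\ref{lemma_embedding}, and linear independence via evaluation on powers of $z$ using the triangularity of the resulting system, the invertibility of $j!$ for $j<p$, and Convention~\ref{conv1} to cancel $h(z)^j$. For part (c) you in fact supply more detail than the paper does (the identification of the kernel with $\id{h^p y^p}$ via the centrality of $y^p$ in characteristic $p$, and the convention for reading the statement when $p=0$), where the paper merely asserts that (c) follows from (a), (b) and Lemma~\ref{lemma_embedding}(b).
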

\begin{proof} 

Consider the $\B$-linear homomorphism of $\FF$-algebras $\Phi: \A_1(\B) \to {\rm Map}(\B[z])$ given by 
$1\to {\rm Id}$, $x\to \chi$, $y\to \pa$. Since $\Phi([y,x]-1)(f)=(\pa \chi - \chi \pa - {\rm Id})(f) = f + z f'  - z f'  -  f = 0$ for all $f\in\B[z]$, the map $\Phi$ is a well-defined.  Applying $\Phi$ to parts (a) and (b) of Lemma~\ref{lemma_embedding} we obtain that   $\Diff_h(\B[z]) = \Phi(\A_h(\B))$ is an $\B$-span of $\{\chi^{i} h(\chi)^j \pa^{j}\tq i,j\geq 0\}$.

Let $p=0$. Assume that some non-zero finite sum $\pi=\sum_{i,j\geq 0} \be_{ij} \chi^{i} h(\chi)^j \pa^{j}$ with $\be_{ij}\in\B$ belongs to the kernel of $\Phi$. Denote by $j_0$ the minimal $j\geq0$ with $\be_{ij}\neq 0$ for some $i$ and denote by $i_0$ the maximal $i$ with $\be_{ij_0}\neq 0$. Then $\pi(z^{j_0}) = j_0!\, h(z)^{j_0} \sum_{0\leq i\leq i_0} \be_{i,j_0}\,  z^i =0$ in $\B[z]$. Thus Convention~\ref{conv1} together with $j_0!\,\be_{i_0,j_0} \neq 0$ implies  a contradiction. Part~(a) is proven.

Assume that $p>0$ and some non-trivial finite sum 
$$\pi=\sum\limits_{0\leq i,\; 0\leq j<p } \be_{ij} \chi^{i} h(\chi)^j \pa^{j}$$ 
with $\be_{ij}\in\B$ belongs to the kernel of $\Phi$. As above, we obtain a contradiction. Since $\pa^p=0$, we conclude the proof of part (b).

Parts (a) and (b) together with  part (b) of Lemma~\ref{lemma_embedding} conclude the proof of part (c).
\end{proof}


\begin{theo}\label{theo0}
In case $p=0$ the algebra $\A_h(\B)$ does not have nontrivial  polynomial identities. 
\end{theo}
\begin{proof}
Assume that $\FF$-algebra $\A_h(\B)$ has a nontrivial polynomial identity. Since $p=0$, there exists $N>0$ such that $\A_h(\B)$ satisfies a nontrivial multilinear identity $f(x_1,\ldots,x_N)=\sum_{\si\in S_N}\al_{\si} x_{\si(1)}\cdots x_{\si(N)}$ with $\al_{\si}\in \FF$. Moreover, we can assume that  $\al_{\rm Id}\neq0$ for the identity permutation ${\rm Id}$. Given $j>0$, we write $F_j$ for a $\B$-linear map  $\chi^{2j} h(\chi)^j \pa^{j}$ from $\Diff_h(\B[z])$.  Denote by $d\geq0$ the degree of $h$ and we write $\eta$ for the leading coefficient of $h$. Recall that $\eta$ is not a zero divisor  by Convention~\ref{conv1}.  Note that
\begin{eq}\label{eq_theo0}
F_{j}(z^m)=\left\{
\begin{array}{cc}
0, & \text{ in case }m<j\\
\frac{m!}{(m-j)!} \, z^{m+j} h(z)^j, & \text{ in case }m\geq j.\\
\end{array}
\right.
\end{eq}%
In particular, $\deg(F_{j}(z^j))=j(d+2)$ and the leading coefficient of $F_{j}(z^j)$ is $j!\,\eta^j$, which is not a zero divisor. 

By parts (a) and (c) of Proposition~\ref{prop_diff}, the equality $f(F_{j_{1}},\ldots,F_{j_N})=0$ holds in $\Diff_h(\B[z])$ for all $j_1,\ldots,j_{N}>0$.  Consider $j_k=(d+2)^{N-k}$ for all $0\leq k\leq N$. 
Note that $1=j_N<j_{N-1}<\cdots < j_0$. We claim that for any $\si\in S_N$ we have
\begin{eq}\label{claim_theo0}
F_{j_{\si(1)}}\circ \cdots \circ F_{j_{\si(N)}}(z) \neq 0 \text{ if and only if } \si= {\rm Id}.
\end{eq}%
\begin{eq}\label{claim2_theo0}
\text{The leading term of } F_{j_1}\circ \cdots \circ F_{j_N}(z) \text{ is } j_{1}!\cdots j_N!\,\eta^{j_{1}+\cdots +j_N} z^{j_{0}}.
\end{eq}%
Let us prove these claims. Assume that $F_{j_{\si(1)}}\circ \cdots \circ F_{j_{\si(N)}}(z) \neq 0$ for some $\si\in S_N$. 

Since $F_{j_{\si(N)}}(z)\neq 0$, then equality~\Ref{eq_theo0} implies that $\si(N)=N$, $j_{\si(N)}=1$,  $\deg(F_{j_{\si(N)}}(z))=d+2=j_{N-1}$ and the leading coefficient of $F_{j_{\si(N)}}(z)$ is $\eta$, which is not a zero divisor. 

Similarly, assume that for $1\leq l< N$ with $\si(l)\leq l$ the inequality $F_{j_{\si(l)}}(g)\neq 0$ holds for some $g\in \B[z]$ with the leading term $j_{l+1}!\cdots j_N!\,\eta^{j_{l+1}+\cdots + j_N} z^{j_l}$. Then equality~\Ref{eq_theo0} implies that $\si(l)=l$ and $\deg(F_{j_{\si(l)}}(g))=j_{l-1}$. Moreover, the leading term of $F_{j_{\si(l)}}(g)$ is $j_{l}!\cdots j_N!\,\eta^{j_{l}+\cdots +j_N} z^{j_{l-1}}$. Consequently applying this reasoning to $l=N-1,\, l=N-2,\ldots, l=1$, we conclude the proof of claims~\Ref{claim_theo0} and~\Ref{claim2_theo0}.

Claims~\Ref{claim_theo0} and~\Ref{claim2_theo0} imply that  $0=f(F_1,\ldots,F_N)(z)=\al_{\rm Id}\, F_{j_1}\circ \cdots \circ F_{j_N}(z)\neq 0$ by Convention~\ref{conv1}; a contradiction.
\end{proof}

\section{Polynomial Identities for $\A_h(\B)$ in positive characteristic}\label{section_positive}
 
We write  $M_n=M_n(\FF)$ for the algebra of all $n\times n$ matrices over $\FF$ and denote by $\M_n$ the algebra of all $n\times n$ matrices over $\B[x,y]$. Denote by $I_n$ the identity $n\times n$ matrix and by $E_{ij}\in M_n$ the matrix such that the $(i,j)^{\rm th}$ entry is equal to one and the rest of entries are zeros. Consider the properties of the next two matrices of $M_p$:  
$$A_{0}=\sum_{i=1}^{p-1}E_{i+1,i}\;\;\text{ and }\;\;B_{0}=\sum_{i=1}^{p-1}i\cdot E_{i,i+1}.$$

\begin{lemma}\label{lemma_A0_B0}
\begin{enumerate}
\item[(a)]  For all $0\leq k<p$ we have that
$$A_{0}^{k}=\sum_{i=1}^{p-k}E_{k+i,i}\;\;\text{ and }\;\;  B_{0}^{k}=\sum_{i=1}^{p-k}\frac{(k+i-1)!}{(i-1)!}E_{i,k+i},$$

\noindent{}where $A_0^0$ and $B_0^0$ stand for $I_p$.


\item[(b)] $B_{0}A_{0}-A_{0}B_{0}=I_{p}$. 
\end{enumerate}
\end{lemma}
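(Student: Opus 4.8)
The plan is to prove both parts by repeatedly applying the matrix-unit multiplication rule $E_{ab}E_{cd}=\delta_{bc}E_{ad}$, so that everything reduces to bookkeeping of indices. For part (a) I would induct on $k$. The base case $k=0$ is immediate, since both stated formulas collapse to $\sum_{i=1}^{p}E_{i,i}=I_{p}$ (the coefficient $\frac{(i-1)!}{(i-1)!}=1$ taking care of $B_{0}^{0}$).

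For the inductive step on $A_{0}^{k}$, I would write $A_{0}^{k+1}=A_{0}\cdot A_{0}^{k}=\big(\sum_{j=1}^{p-1}E_{j+1,j}\big)\big(\sum_{i=1}^{p-k}E_{k+i,i}\big)$ and observe that a term survives only when $j=k+i$; combining the constraint $1\le j\le p-1$ with $1\le i\le p-k$ forces $1\le i\le p-(k+1)$, which yields exactly $A_{0}^{k+1}=\sum_{i=1}^{p-(k+1)}E_{(k+1)+i,\,i}$. The step for $B_{0}^{k}$ is analogous: in $B_{0}\cdot B_{0}^{k}$ the product $E_{j,j+1}E_{i,k+i}$ survives only for $i=j+1$, and its coefficient is $j\cdot\frac{(k+j)!}{j!}=\frac{(k+j)!}{(j-1)!}$, which is precisely the coefficient of $E_{j,\,(k+1)+j}$ predicted by the formula at level $k+1$; tracking the index ranges again gives $1\le j\le p-(k+1)$. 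This closes the induction.

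For part (b) I would compute the two products directly with the same rule. The product $E_{i,i+1}E_{j+1,j}$ is nonzero only when $i=j$, giving $B_{0}A_{0}=\sum_{i=1}^{p-1}i\,E_{i,i}$; likewise $E_{j+1,j}E_{i,i+1}$ forces $j=i$, giving $A_{0}B_{0}=\sum_{i=1}^{p-1}i\,E_{i+1,i+1}$. Reindexing the second sum by $l=i+1$ rewrites it as $\sum_{l=2}^{p}(l-1)E_{l,l}$, so the commutator $B_{0}A_{0}-A_{0}B_{0}$ is diagonal with $(l,l)$-entry equal to $1$ for $l=1$, equal to $l-(l-1)=1$ for $2\le l\le p-1$, and equal to $-(p-1)$ for $l=p$.

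The main (and essentially only) subtlety is this last diagonal entry: over $\ZZ$ or in characteristic $0$ the commutator would be $\mathrm{diag}(1,\dots,1,-(p-1))$, which is \emph{not} $I_{p}$. It is precisely the hypothesis $p=\Char\FF>0$ that forces $-(p-1)=1-p\equiv 1$, so that every diagonal entry collapses to $1$ and the difference equals $I_{p}$. I would therefore emphasize that part (b) genuinely uses positive characteristic, in contrast to the purely combinatorial induction of part (a).
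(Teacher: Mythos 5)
Your proof is correct and follows essentially the same route as the paper's: induction on $k$ using the matrix-unit rule $E_{ab}E_{cd}=\delta_{bc}E_{ad}$ for part (a) (you multiply by $A_0$, resp.\ $B_0$, on the left where the paper multiplies on the right, which is immaterial), and a direct computation of the two products for part (b). Your explicit remark that the last diagonal entry $-(p-1)$ collapses to $1$ only because $\Char\FF=p$ is a valid and worthwhile observation that the paper leaves implicit in its ``Part (b) is straightforward.''
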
	
\begin{proof} The formula for $A_{0}^{k}$ is trivial. We prove the formula for $B_{0}^{k}$ by induction on $k$. For $k=1$ the claim holds. Assume that the claim is valid for some $k<p-1$. Then
	\begin{align*}
	B_{0}^{k+1}&=\left(\sum_{i=1}^{p-k}\frac{(k+i-1)!}{(i-1)!}E_{i,k+i}\right)\left(\sum_{r=1}^{p-1}r\cdot E_{r,r+1} \right)=\sum_{i=1}^{p-(k+1)}\frac{(i+k)!}{(i-1)!}E_{i,k+1+i}
	\end{align*}
and the required is proven. Part (b) is straightforward. 
\end{proof}
	
Define the $\B$-linear homomorphism $\varphi:\B\LA x,y \RA \rightarrow \M_p$ of algebras by 
$$x\mapsto A,\;\;  y\mapsto B,\;\; 1\mapsto I_p,$$ 
where $A=xI_{p}+A_{0}$ and $B=yI_{p}+B_{0}$. Since $\be A=A \be$ and $\be B = B\be$ for each $\be\in B$, the homomorphism $\varphi$ is well-defined.
 	
\begin{lemma}\label{A1 subset Map}
The  homomorphism $\varphi$  induces the injective $\B$-linear homomorphism $\ov{\varphi}:\A_1(\B)\rightarrow \M_p$. In particular,  the restriction of $\ov{\varphi}$ to $\A_h(\B)\subset \A_1(\B)$ is the injective $\B$-linear homomorphism $\A_h(\B)\rightarrow \M_p$.
\end{lemma}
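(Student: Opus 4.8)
The plan is to verify first that $\varphi$ descends to $\A_1(\B)$ and then to prove injectivity by reading off a single matrix entry. For well-definedness I would check that $\varphi$ annihilates the defining relation, i.e.\ that $BA-AB=I_p$ in $\M_p$. Since $xI_p$ and $yI_p$ are scalar matrices they are central in $\M_p$ and commute with $A_0$ and $B_0$, while $xy=yx$ in $\B[x,y]$; expanding $(yI_p+B_0)(xI_p+A_0)-(xI_p+A_0)(yI_p+B_0)$ the four mixed terms cancel and one is left with $B_0A_0-A_0B_0$, which equals $I_p$ by part~(b) of Lemma~\ref{lemma_A0_B0}. As $\varphi$ is an $\FF$-algebra homomorphism sending $yx-xy-1$ to $0$, it annihilates the two-sided ideal $\id{yx-xy-1}$ and therefore induces the desired $\B$-linear homomorphism $\ov{\varphi}:\A_1(\B)\to\M_p$.

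For injectivity I would exploit the corner $(1,1)$ entry. From the definitions $A_0=\sum_{i}E_{i+1,i}$ has zero first row and $B_0=\sum_i i\,E_{i,i+1}$ has zero first column, so the first row of $A=xI_p+A_0$ is $(x,0,\dots,0)$ and the first column of $B=yI_p+B_0$ is $(y,0,\dots,0)^{\!\top}$. A one-line induction on $i$ and $j$ then gives $(A^i)_{1m}=x^i\,\de_{1m}$ and $(B^j)_{m1}=y^j\,\de_{m1}$ for all $m$, whence
\[
(A^iB^j)_{11}=\sum_{m=1}^{p}(A^i)_{1m}(B^j)_{m1}=x^iy^j.
\]
By Lemma~\ref{lemma_basis} every element of $\A_1(\B)$ is uniquely $a=\sum_{i,j\geq0}\be_{ij}x^iy^j$ with $\be_{ij}\in\B$, so the $(1,1)$ entry of $\ov{\varphi}(a)=\sum_{i,j}\be_{ij}A^iB^j$ is exactly $\sum_{i,j}\be_{ij}x^iy^j\in\B[x,y]$. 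If $\ov{\varphi}(a)=0$ this polynomial vanishes, and since $\{x^iy^j\}$ is a $\B$-basis of $\B[x,y]$ all $\be_{ij}$ vanish; hence $a=0$ and $\ov{\varphi}$ is injective.

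Finally, the ``in particular'' assertion is immediate: by Lemma~\ref{lemma_embedding}(a) the algebra $\A_h(\B)$ embeds in $\A_1(\B)$, and the restriction of the injective map $\ov{\varphi}$ to the subalgebra $\A_h(\B)$ is again injective. I do not expect a serious obstacle here; the only real decision is which entry to inspect, and the corner entry is the natural choice precisely because $A_0$ is strictly lower triangular and $B_0$ strictly upper triangular, so that all ``correction'' contributions coming from $A_0$ and $B_0$ are pushed off the $(1,1)$ position and the original coefficients $\be_{ij}$ are recovered intact.
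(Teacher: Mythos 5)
Your proposal is correct and follows essentially the same route as the paper: verify $BA-AB=I_p$ via Lemma~\ref{lemma_A0_B0}(b) to get well-definedness, then recover the coefficients $\be_{ij}$ from the $(1,1)$ entry of $\ov{\varphi}(a)$ using that $A^i$ has first row $(x^i,0,\dots,0)$ and $B^j$ has first column $(y^j,0,\dots,0)^{\top}$, together with the basis from Lemma~\ref{lemma_basis}. No gaps; the extra detail you supply (the explicit induction for the corner entries and the cancellation of the mixed terms) is merely what the paper leaves implicit.
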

		
\begin{proof}
By part (b) of Lemma \ref{lemma_A0_B0} we have that $\varphi(yx-xy-1)=BA-AB-I_{p}=0$. Therefore, $\varphi$ induces a $\B$-linear homomorphism $\ov{\varphi}:\A_1(\B)\rightarrow \M_p$ of algebras. 

Assume that there exists a nonzero $a\in \A_1(\B)$ such that $\ov{\varphi}(a)=0$. Since  $\{x^{i}y^{j}\tq i,j\geq 0\}$  is an $\B$-basis for $\A_1(\B)$ by Lemma~\ref{lemma_basis}, we have $a=\sum_{i,j\geq 0}\be_{ij}x^{i}y^{j}$ for a finite sum with $\be_{ij}\in \B$. Thus 
$0= \ov{\varphi}(a) = \sum_{i,j\geq 0} \be_{ij} A^i B^j$. The equalities 
$$A^{i}=\begin{pmatrix}
	x^{i} & 0 &\cdots & 0 \\
	* & * &\cdots & * \\
	\vdots &\vdots&&\vdots\\
	* & * &\cdots & * \\
	\end{pmatrix}
	\;\; \text{and} \;\;
	B^{j}=\begin{pmatrix}
	y^{j} & * &\cdots & * \\
	0 & * &\cdots & * \\
	\vdots &\vdots&&\vdots\\
	0 & * &\cdots & * \\
	\end{pmatrix}$$ 
imply that the $(1,1)^{\rm th}$ entry of $A^{i}B^{j}$ is $(A^{i}B^{j})_{1,1}=x^{i}y^{j}$. Hence
$0 = (\varphi(a))_{1,1} = \sum_{i,j\geq 0} \be_{ij} x^{i}y^{j}$ in $\B[x,y]$. 
Hence $\be_{ij}=0$ for all $i,j\geq 0$ and $a=0$; a contradiction. Therefore $\ov{\varphi}$ is injective.
\end{proof}

Given $1\leq i,j\leq p$ and $k\geq 1$, we write $z_{ij}(k)$ for $x_{i+p(j-1)+p^2(k-1)}\in \FF\LA X\RA$. The {\it generic $p\times p$ matrix $X_k$ with non-commutative elements} is the matrix  $X_k=(z_{ij}(k))_{1\leq i,j\leq p}$.
				
\begin{cor}\label{Id(Mp) subset Id(Ah)} 
$\Id{M_p(\B)}\subset \Id{\A_h(\B)}$, if the field $\FF$ is infinite. 
\end{cor}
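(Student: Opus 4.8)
The plan is to combine the embedding of Lemma~\ref{A1 subset Map} with the standard fact that adjoining central commuting variables over an infinite field does not destroy polynomial identities. First I would record the trivial half: since $\ov{\varphi}$ realizes $\A_h(\B)$ as a subalgebra of $\M_p$, every identity of $\M_p$ is automatically an identity of the subalgebra, so $\Id{\M_p}\subseteq\Id{\A_h(\B)}$. Thus it suffices to prove $\Id{M_p(\B)}\subseteq\Id{\M_p}$, i.e. that passing from $\B$ to $\B[x,y]$ in the entries creates no new non-identities.

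The next step is to identify $\M_p$. Because $x$ and $y$ are central in $\B[x,y]$ and commute with $\B$, one has $\B[x,y]\cong\B\otimes_\FF\FF[x,y]$ and hence $\M_p=M_p(\B[x,y])\cong M_p(\B)\otimes_\FF\FF[x,y]$. So the corollary reduces to the following general statement: for an infinite field $\FF$ and any $\FF$-algebra $A$ one has $\Id{A}\subseteq\Id{A\otimes_\FF\FF[x,y]}$, which I would then apply to $A=M_p(\B)$.

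To prove this statement I would argue by specialization. Let $f\in\Id{A}$ and take arbitrary $b_1,\dots,b_m\in A\otimes_\FF\FF[x,y]$, written as $b_i=\sum_{a,b\geq0}c_{i,a,b}\,x^ay^b$ with finitely many nonzero $c_{i,a,b}\in A$. Expanding, $f(b_1,\dots,b_m)=\sum_{a,b\geq0}d_{a,b}\,x^ay^b$ for suitable $d_{a,b}\in A$. For every pair $(\la,\mu)\in\FF^2$ the substitution $x\mapsto\la$, $y\mapsto\mu$ carries each $b_i$ into $A$, so $f\in\Id{A}$ forces $\sum_{a,b}d_{a,b}\,\la^a\mu^b=0$. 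Thus the $A$-valued polynomial $\sum_{a,b}d_{a,b}\,x^ay^b$ vanishes on all of $\FF^2$; since $\FF$ is infinite, a Vandermonde argument (applied to a basis of the finite-dimensional $\FF$-subspace spanned by the finitely many $d_{a,b}$, or coordinatewise through $\FF$-linear functionals on $A$) forces every $d_{a,b}=0$. Hence $f(b_1,\dots,b_m)=0$, proving $f\in\Id{A\otimes_\FF\FF[x,y]}$.

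I expect the only genuine subtlety --- and the reason the hypothesis that $\FF$ is infinite cannot be dropped --- to be this last step: in positive characteristic one cannot in general recover a multihomogeneous $f$ from its full multilinearization, so the usual linearization shortcut is unavailable. The specialization argument above sidesteps this entirely and is characteristic-free, invoking the infiniteness of $\FF$ exactly once, in the Vandermonde step.
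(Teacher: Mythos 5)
Your proof is correct. The first half coincides with the paper's: the embedding $\ov{\varphi}$ of Lemma~\ref{A1 subset Map} gives $\Id{\M_p}\subset\Id{\A_h(\B)}$ at once. For the remaining inclusion $\Id{M_p(\B)}\subset\Id{\M_p}$ you and the paper rely on the same underlying principle --- over an infinite field, tensoring with a commutative polynomial algebra creates no new non-identities --- but execute it along different routes. The paper first passes to the generic matrices $X_k$: writing $f(X_1,\ldots,X_n)=(f_{ij})$, it notes that $f\in\Id{M_p(\B)}$ is equivalent to $f_{ij}\in\Id{\B}$ for all $i,j$, then invokes the known lemma $\Id{\B}=\Id{\FF[x,y]\otimes_{\FF}\B}$ (Lemma 1.4.2 of Giambruno--Zaicev) to conclude $f_{ij}\in\Id{\B[x,y]}$ and hence $f\in\Id{\M_p}$. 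You instead apply the tensor principle directly to $A=M_p(\B)$, using $\M_p\cong M_p(\B)\otimes_{\FF}\FF[x,y]$, and prove it from scratch: since $x,y$ are central in $\B[x,y]$, each evaluation $(x,y)\mapsto(\la,\mu)$ is an $\FF$-algebra homomorphism $\M_p\to M_p(\B)$ commuting with the evaluation of $f$, so all specializations of $f(b_1,\ldots,b_m)=\sum_{a,b}d_{a,b}x^ay^b$ vanish, and the Vandermonde argument on a basis of the span of the finitely many $d_{a,b}$ kills every coefficient. Your version is more self-contained (no generic-matrix machinery, no external reference) at the cost of reproving a standard fact; the paper's is shorter on the page but leans on the entrywise description of identities of $M_p(\B)$ and a cited lemma. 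Both correctly use infiniteness of $\FF$ exactly once, and your remark that the usual multilinearization shortcut is unavailable in positive characteristic is well taken.
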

\begin{proof}
Lemma~\ref{A1 subset Map} implies that $\Id{\M_p}\subset \Id{\A_h(\B)}$.

Since $B\subset B[x,y]$, we have $\Id{\M_p}\subset \Id{M_p(\B)}$. On the other hand, assume that $f=f(x_1,\ldots,x_n)$ is a polynomial identity for $M_p(\B)$. Then $f(X_1,\ldots,X_n)= (f_{ij})_{1\leq i,j\leq n}$ for some $f_{ij}\in \FF\LA X\RA$ with $f_{ij}\in \Id{\B}$.  It is well-known that for an infinite field $\FF$ and a commutative unital $\FF$-algebra $\algC$ the polynomial identities for a unital $\FF$-algebra $\algB$ and $\algC\otimes_{\FF} \algB$ are the same (for example, see Lemma 1.4.2 of~\cite{A.Gia}). Since $\B[x,y]=\FF[x,y]\otimes_{\FF} \B$, we obtain $f_{ij}\in \Id{\B[x,y]}$ and $f\in \Id{\M_p}$. The required is proven.
\end{proof}
		

For each $\al\in Z(\B)$ consider the evaluation $\B$-linear homomorphism  $\epsilon_{\al}:\B[x,y]\rightarrow \B$ of unital $\FF$-algebras defined by 
$$x\mapsto \al, \;\; \ y\mapsto 0$$
and extend it to the evaluation homomorphism $\varepsilon_{\al}:\M_p\rightarrow M_p(\B)$. Since $\A_h(\B)$ is a subalgebra of $\M_p$ by means of embedding $\ov{\varphi}$ (see Lemma \ref{A1 subset Map}), we can consider the images of $x,\y\in \A_h(\B)$  in $M_p(\B)$, which we denote by $C_{\al}$ and $D_{\al}$, respectively: 
$$\begin{array}{cll}
C_{\al}=\varepsilon_{\al}(\ov{\varphi}(x))& =\varepsilon_{\al}(A) & = \al I_p + A_0,\\
D_{\al}=\varepsilon_{\al}(\ov{\varphi}(\y))& =\varepsilon_{\al}(Bh(A)) & = B_{0}\varepsilon_{\al}(h(A)).
\end{array}
$$

\noindent{}Obviously,  $\be C_{\al} = C_{\al} \be$ and $\be D_{\al} = D_{\al} \be$ for each $\be\in B$. To obtain the explicit description of the matrix $D_{\al}$ we calculate $h(A)$. For  $r\geq1$ denote the  $r^{\rm th}$ derivative of $h\in\B[x]$ by $h^{(r)}=\frac{d^{r}h}{dx^{r}}$ and write $h^{(0)}$ for $h$. Note that $h^{(r)}(\al)$ lies in the center of $\B$.

\begin{lemma}\label{lemma_hA}
$$h(A) =\sum_{i=1}^{p}\sum_{j=1}^{i} \frac{1}{(i-j)!} h^{(i-j)} E_{ij}.$$
\end{lemma}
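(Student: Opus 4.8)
The plan is to exploit the decomposition $A = xI_p + A_0$, in which the scalar matrix $xI_p$ is central (it commutes with $A_0$ and with every element of $\B$) and the nilpotent summand $A_0$ satisfies $A_0^p = 0$, since $A_0$ is a strictly lower triangular $p\times p$ matrix. Granting these two facts, $h(A)$ can be expanded by a finite Taylor formula around $xI_p$, after which the claimed formula falls out by substituting the explicit powers of $A_0$ already computed in Lemma~\ref{lemma_A0_B0}.

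Concretely, writing $h = \sum_n c_n x^n$ with $c_n \in Z(\B)$ and using that $xI_p$ and $A_0$ commute, I would apply the binomial theorem to each power and truncate via $A_0^p = 0$:
$$(xI_p + A_0)^n = \sum_{k=0}^{n} \binom{n}{k} (xI_p)^{n-k} A_0^k = \sum_{k=0}^{\min\{n,\,p-1\}} \binom{n}{k}\, x^{n-k} A_0^k.$$
Summing over $n$ and collecting the coefficient of $A_0^k$, I would recognize
$$\sum_n c_n \binom{n}{k} x^{n-k} = \frac{1}{k!}\sum_n c_n \frac{n!}{(n-k)!}\, x^{n-k} = \frac{1}{k!}\, h^{(k)},$$
which yields the finite expansion $h(A) = \sum_{k=0}^{p-1} \frac{1}{k!}\, h^{(k)} A_0^k$.

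The remaining step is purely combinatorial: substitute $A_0^k = \sum_{i=1}^{p-k} E_{k+i,\,i}$ from part~(a) of Lemma~\ref{lemma_A0_B0} (with $A_0^0 = I_p$), and reindex the resulting double sum by the row and column indices of each matrix unit. Setting the column index to $j$ and the row index to $i = k+j$, so that $k = i-j$ ranges over $0 \leq i-j$ with $1\leq j\leq i\leq p$, converts the expansion into $\sum_{i=1}^p \sum_{j=1}^i \frac{1}{(i-j)!}\, h^{(i-j)} E_{ij}$, exactly as stated.

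The one point that needs care — and the only genuine obstacle — is the validity of the identity $\sum_n c_n \binom{n}{k} x^{n-k} = \frac{1}{k!}\, h^{(k)}$ in characteristic $p$, since dividing by $k!$ is illegitimate once $k \geq p$. This is precisely why the nilpotency $A_0^p = 0$ is essential: it confines the sum to indices $0 \leq k \leq p-1$, for which $k!$ is a product of integers smaller than $p$ and hence invertible in $\FF$. Equivalently, the falling factorial $\tfrac{n!}{(n-k)!}$ is an honest integer, and the division by $k!$ is performed only for $k < p$. With the range of $k$ restricted in this way every factorial that appears is a unit, the manipulation is justified, and the remaining bookkeeping is routine.
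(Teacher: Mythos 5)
Your proposal is correct and follows essentially the same route as the paper: both decompose $A = xI_p + A_0$, expand by the binomial theorem using $A_0^p=0$, identify the coefficient of $A_0^k$ with $\tfrac{1}{k!}h^{(k)}$ (noting that $k!$ is invertible precisely because $k<p$), and reindex via the formula for $A_0^k$ from Lemma~\ref{lemma_A0_B0}. The only cosmetic difference is that the paper first treats the monomial case $h=x^k$ and then invokes linearity, whereas you carry the general sum $h=\sum_n c_n x^n$ through the whole computation.
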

		
\begin{proof} 
We start with the case of $h=x^{k}\in\B[x]$ for some $k\geq0$. Obviously, the claim of the lemma holds for $h=1$. Therefore, we assume that $k\geq1$. Since $A_{0}^{r}=0$ for all $r\geq p$, we have 
$$h(A) = A^{k} = (xI_{p}+A_{0})^{k} = \sum_{r=0}^{\min\{k,p-1\}}\binom{k}{r}x^{k-r}A_{0}^{r}.$$

\noindent{}Part (a) of Lemma \ref{lemma_A0_B0} implies
	
$$h(A) = \sum_{r=0}^{\min\{k,p-1\}}\binom{k}{r}x^{k-r} \left(\sum_{i=1}^{p-r}E_{r+i,i}\right).$$

\noindent{}Regrouping the terms we obtain

\begin{eq}\label{eq_lemma_hA}
h(A)=\sum_{i=1}^{p}\; \sum_{j=\max\{1,i-k\}}^{i} \binom{k}{i-j}x^{k-(i-j)}E_{ij}.
\end{eq}
 
\noindent{}Note that for $0\leq r< p$ we can rewrite 
$$\frac{1}{r!} h^{(r)} = \left\{
\begin{array}{rl}
\binom{k}{r}x^{k-r}, & \text{ if } r\leq k \\
0, & \text{ otherwise } \\
\end{array}
\right.,
$$
where $r!$ is not zero in $\FF$.  Hence equality~\Ref{eq_lemma_hA} implies that the claim holds for $h=x^{k}$.

The general case follows from the proven partial case and the $\B$-linearity of derivatives.
\end{proof}

Lemma~\ref{lemma_hA} together with the definition of $D_{\al}$ immediately implies the next corollary.

\begin{cor}\label{cor_D0}
$$D_{\al} = \sum_{i=1}^{p-1} \sum_{j=1}^{i+1} \frac{i}{(i-j+1)!} h^{(i-j+1)}(\al) E_{ij}.$$
\end{cor}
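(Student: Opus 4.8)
The plan is to take the defining identity $D_{\al}=B_0\,\varepsilon_{\al}(h(A))$ at face value and substitute the closed form for $h(A)$ supplied by Lemma~\ref{lemma_hA}, after which the corollary reduces to a single matrix-unit multiplication together with an index shift. No serious machinery is needed; the content is entirely in the bookkeeping.

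First I would apply the evaluation homomorphism $\varepsilon_{\al}$ entrywise to the expression of Lemma~\ref{lemma_hA}. Because each derivative $h^{(i-j)}$ is a polynomial in $x$ and $\varepsilon_{\al}$ sends $x\mapsto\al$, this produces
$$\varepsilon_{\al}(h(A))=\sum_{i=1}^{p}\sum_{j=1}^{i}\frac{1}{(i-j)!}\,h^{(i-j)}(\al)\,E_{ij}.$$
Next I would left-multiply by $B_0=\sum_{r=1}^{p-1}r\cdot E_{r,r+1}$ and invoke the matrix-unit rule $E_{r,r+1}E_{ij}=\de_{r+1,i}E_{rj}$. Only the terms with $i=r+1$ survive; since $r$ runs over $1,\ldots,p-1$, the surviving row index $i=r+1$ runs over $2,\ldots,p$, and for each such row the column index $j$ runs up to $i=r+1$.

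The remaining step is to relabel. Renaming the row index $r$ of $B_0$ back to $i$ (so that $i$ now ranges over $1,\ldots,p-1$), the original row index $r+1$ of $\varepsilon_{\al}(h(A))$ becomes $i+1$; consequently the derivative order $i-j$ turns into $(i+1)-j=i-j+1$, the scalar $\frac{1}{(i-j)!}$ becomes $\frac{1}{(i-j+1)!}$, the factor $r$ coming from $B_0$ becomes $i$, and the upper limit on $j$ becomes $i+1$. Collecting everything yields precisely
$$D_{\al}=\sum_{i=1}^{p-1}\sum_{j=1}^{i+1}\frac{i}{(i-j+1)!}\,h^{(i-j+1)}(\al)\,E_{ij},$$
as claimed.

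I do not expect any genuine obstacle: the computation is forced once Lemma~\ref{lemma_hA} is in hand. The only point demanding care — and the one most likely to introduce an off-by-one error — is tracking how multiplication by $B_0$ shifts every row index down by one, thereby turning the upper summation bound $i$ for $h(A)$ into $i+1$ for $D_{\al}$ and simultaneously raising each derivative order by one.
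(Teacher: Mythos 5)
Your computation is correct and is exactly the paper's argument: the paper likewise obtains the corollary by writing $D_{\al}=B_{0}\,\varepsilon_{\al}(h(A))$, substituting the formula of Lemma~\ref{lemma_hA}, and multiplying out the matrix units, with the same index shift $r\mapsto i$, $i\mapsto i+1$. Nothing further is needed.
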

\bigskip

For short, denote the $(i,j)^{\rm th}$ entry of $D_{\al}$ by $\D{i}{j}\in Z(\B)$ and for all $1\leq k < p$ define 
\begin{eq}\label{eq_Dak}
D_{\al,k} = D_{\al} - \sum_{r=0}^{k-1}\D{k}{k-r}A_0^{r} = D_{\al} - \sum_{r=0}^{k-1} \sum_{i=1}^{p-r} \D{k}{k-r}E_{r+i,i}.
\end{eq}%
\noindent{}We apply the following technical lemma in the proof of key Proposition~\ref{Id(Ah) subset Id(Mp)} (see below). 

\begin{lemma}\label{Epk.Dk}
For all $1\leq r\leq p$ and $1\leq k < p$ we have 
$$E_{rk}\, D_{\al,k}=k\, h(\al) E_{r,k+1}.$$
\end{lemma}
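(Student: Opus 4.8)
The plan is to compute the product $E_{rk}\,D_{\al,k}$ directly, using the fact that left multiplication by the matrix unit $E_{rk}$ extracts the $k$-th row: for any $M\in\M_p$ one has $E_{rk}M=\sum_{j}M_{kj}E_{rj}$, which follows at once from $E_{ab}E_{cd}=\de_{bc}E_{ad}$. Hence the whole statement reduces to identifying the $k$-th row of $D_{\al,k}$, and the claim is precisely that this row consists of a single nonzero entry, namely $k\,h(\al)$ in column $k+1$.

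First I would read off the $k$-th row of $D_{\al}$ from Corollary~\ref{cor_D0}: its entries are $\D{k}{j}$ for $1\le j\le k+1$ and zero otherwise, so $E_{rk}D_{\al}=\sum_{j=1}^{k+1}\D{k}{j}E_{rj}$. Next I would determine the action of $E_{rk}$ on each power $A_0^s$ occurring in~\Ref{eq_Dak} (relabelling its summation index as $s$ to avoid the clash with the free index $r$). By part~(a) of Lemma~\ref{lemma_A0_B0}, $A_0^s=\sum_{i=1}^{p-s}E_{s+i,i}$, and left multiplication by $E_{rk}$ annihilates every summand except the one with $s+i=k$, yielding $E_{rk}A_0^s=E_{r,k-s}$ for $0\le s\le k-1$; here the column index $k-s$ stays in the admissible range $1\le k-s\le p$ precisely because $s<k<p$.

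Substituting both computations into~\Ref{eq_Dak} gives
$$E_{rk}D_{\al,k}=\sum_{j=1}^{k+1}\D{k}{j}E_{rj}-\sum_{s=0}^{k-1}\D{k}{k-s}E_{r,k-s}.$$
Reindexing the second sum by $j=k-s$ rewrites it as $\sum_{j=1}^{k}\D{k}{j}E_{rj}$, which exactly cancels the columns $j=1,\dots,k$ of the first sum and leaves only the term $\D{k}{k+1}E_{r,k+1}$. To finish I would evaluate this surviving coefficient from Corollary~\ref{cor_D0} with $i=k$ and $j=k+1$: this gives $\D{k}{k+1}=\frac{k}{0!}\,h^{(0)}(\al)=k\,h(\al)$, which is the asserted identity.

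The argument is essentially mechanical once these two ingredients are assembled; the only delicate point is the bookkeeping in the cancellation — in particular, observing that the subtracted terms in~\Ref{eq_Dak} reach only columns $1$ through $k$, so that the column $k+1$ entry in the $k$-th row of $D_{\al}$ survives untouched. This is exactly the phenomenon that the definition of $D_{\al,k}$ was engineered to produce, and it is the conceptual core of the lemma.
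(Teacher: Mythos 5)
Your proof is correct and follows essentially the same route as the paper: both expand $E_{rk}\,D_{\al,k}$ via the matrix-unit rule $E_{ab}E_{cd}=\de_{bc}E_{ad}$, observe that the subtracted terms in~\Ref{eq_Dak} cancel exactly the columns $1,\dots,k$ of the $k$-th row of $D_{\al}$, and evaluate the surviving entry $\D{k}{k+1}=k\,h(\al)$ from Corollary~\ref{cor_D0}. No gaps; the bookkeeping on the index ranges is handled correctly.
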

		
\begin{proof}
We have
\begin{align*}
	E_{rk}D_{\al,k}&=\sum_{i=1}^{p-1} \sum_{j=1}^{i+1} \D{i}{j}E_{rk}E_{ij} - \sum_{j=0}^{k-1}\sum_{i=1}^{p-j} \D{k}{k-j} E_{rk} E_{i+j,i}\\
	&= \sum_{j=1}^{k+1} \D{k}{j}E_{rj} - \sum_{j=1}^{k} \D{k}{j} E_{rj}\\
	&= \D{k}{k+1}E_{r,k+1}.\\
	\end{align*} 
Equality $\D{k}{k+1} = k\, h(\al)$ concludes the proof.
\end{proof}

\begin{prop}\label{Id(Ah) subset Id(Mp)} 
\begin{enumerate}
\item[(a)] Assume $\al\in Z(\B)$. Then $\varepsilon_{\al}(\A_h(\B))$ contains $h(\al)^{2(p-1)} M_p(\B)$.

\item[(b)] Assume $h(\al)$ is invertible in $\B$ for some $\al\in Z(\B)$. Then $\Id{\A_h(\B)}\subset \Id{M_p(\B)}$.

\item[(c)] Assume $h(\al)$ is not a zero divisor for some $\al\in Z(\B)$ and $\FF$ is infinite. Then $\Id{\A_h(\B)}\subset \Id{M_p(\B)}$.	
\end{enumerate}
\end{prop}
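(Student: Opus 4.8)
The plan is to reduce parts (b) and (c) to the key containment (a), and to prove (a) by an explicit downward induction on the row index. Throughout set $\phi=\varepsilon_{\al}\circ\ov{\varphi}:\A_h(\B)\to M_p(\B)$, which by Lemma~\ref{A1 subset Map} is a $\B$-linear $\FF$-algebra homomorphism with image $\varepsilon_{\al}(\A_h(\B))$. For (b), if $h(\al)$ is invertible then so is $h(\al)^{2(p-1)}$, hence $h(\al)^{2(p-1)}M_p(\B)=M_p(\B)$ and part (a) forces $\phi$ to be surjective; since polynomial identities descend to homomorphic images, this gives $\Id{\A_h(\B)}\subset\Id{M_p(\B)}$. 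For (c) I would invoke that over an infinite field every T-ideal is generated by its multihomogeneous elements, so it suffices to take a multihomogeneous $f\in\Id{\A_h(\B)}$ of total degree $d$ and verify $f$ vanishes on $M_p(\B)$. Given $m_1,\dots,m_n\in M_p(\B)$, part (a) writes $h(\al)^{2(p-1)}m_t=\phi(a_t)$ for suitable $a_t\in\A_h(\B)$, so, using that $h(\al)^{2(p-1)}\in Z(\B)$ is central and $f$ is multihomogeneous, $h(\al)^{2(p-1)d}f(m_1,\dots,m_n)=f(\phi(a_1),\dots,\phi(a_n))=\phi(f(a_1,\dots,a_n))=0$. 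Since $h(\al)$ is not a zero divisor, neither is $h(\al)^{2(p-1)d}$, and cancelling it yields $f(m_1,\dots,m_n)=0$.

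The core is part (a). The image $\varepsilon_{\al}(\A_h(\B))$ is a $\B$-submodule of $M_p(\B)$ containing $I_p$, the matrix $A_0=C_{\al}-\al I_p$, all powers $A_0^r$, the matrix $D_{\al}$, and hence each $D_{\al,k}$, since the correction in~\Ref{eq_Dak} uses only $D_{\al}$, the central scalars $\D{k}{k-r}$ and the $A_0^r$. It therefore suffices to prove $h(\al)^{2(p-1)}E_{ij}\in\varepsilon_{\al}(\A_h(\B))$ for all $1\le i,j\le p$, as these span $h(\al)^{2(p-1)}M_p(\B)$ as a $\B$-module. For the bottom row I would start from $A_0^{p-1}=E_{p,1}$ (Lemma~\ref{lemma_A0_B0}) and multiply on the right successively by $D_{\al,1},\dots,D_{\al,p-1}$: each application of Lemma~\ref{Epk.Dk} advances the column by one at the cost of a factor $k\,h(\al)$ with $1\le k<p$, so after padding with central powers of $h(\al)$ I obtain $h(\al)^{p-1}E_{p,j}$ for every $j$, the accumulated integer $(p-1)!$ being invertible in $\FF$.

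The main step, and the principal obstacle, is climbing the rows. I would prove by downward induction on $i$ that $h(\al)^{2p-1-i}E_{ij}\in\varepsilon_{\al}(\A_h(\B))$ for all $j$, the base $i=p$ being the previous paragraph. A direct computation from Corollary~\ref{cor_D0} gives $D_{\al}E_{i+1,j}=\sum_{a=i}^{p-1}\D{a}{i+1}E_{aj}$, whose top term is $\D{i}{i+1}E_{ij}=i\,h(\al)E_{ij}$ while every remaining summand lies in a row $a>i$. Multiplying the induction datum $h(\al)^{2p-2-i}E_{i+1,j}$ on the left by $D_{\al}$ and subtracting the lower-row contributions — each a central multiple of some $h(\al)^{2p-2-i}E_{aj}$ with $a>i$, hence available in the image by the inductive hypothesis applied to row $a$ — isolates $i\,h(\al)^{2p-1-i}E_{ij}$ in the image. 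As $1\le i<p$ makes $i$ a unit in $\FF$, this yields $h(\al)^{2p-1-i}E_{ij}$.

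At $i=1$ the exponent equals exactly $2(p-1)$, and for every $i$ one has $2p-1-i\le 2(p-1)$, so a final padding by central powers of $h(\al)$ produces $h(\al)^{2(p-1)}E_{ij}$ uniformly, completing (a). The delicate points I expect to verify carefully are that all integer coefficients that arise (the factors $i$ and the accumulated factorials) are units modulo $p$, and that the bookkeeping of the $h(\al)$-exponents is consistent across rows, so that at each inductive step the lower-row terms are genuinely available at the power $h(\al)^{2p-2-i}$ demanded by the subtraction.
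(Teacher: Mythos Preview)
Your argument is correct. Parts (b) and (c) match the paper essentially verbatim (the paper uses homogeneity by total degree rather than multihomogeneity, but this is immaterial). For part~(a) your route diverges from the paper in the ``row-climbing'' step. The paper proves the sharper claim $\be^{p+k-r-1}E_{rk}\in\La$ and, to pass from row $r+1$ to row $r$, isolates $\be^{p-r}E_{r1}$ by subtracting already-known entries from $\be^{p-r}A_0^{r-1}$, then rebuilds the whole row by right multiplication with the $D_{\al,k}$ exactly as in the base case. You instead climb each column independently by left-multiplying $E_{i+1,j}$ by $D_{\al}$ and stripping off the lower-row terms; this exploits the explicit column structure of $D_{\al}$ from Corollary~\ref{cor_D0} rather than Lemma~\ref{Epk.Dk}. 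Your exponent bookkeeping ($2p-1-a\le 2p-2-i$ for $a>i$) is sound, and the unit factors $i\in\FF_p^\times$ are handled correctly. The paper's approach yields a slightly finer exponent depending on both row and column, while yours is more uniform and avoids redoing the column walk at every row; either suffices for the stated conclusion $h(\al)^{2(p-1)}M_p(\B)\subset\La$.
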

		
\begin{proof} For short, we write $\be$ for $h(\al)\in Z(\B)$.

\medskip
\noindent{\bf (a)} 
Denote by $\La = \varepsilon_{\al}(\A_h(\B)) = \alg_{\B}\{I_p,C_{\al}, D_{\al}\}$ the $\FF$-algebra generated by $\B I_p$, $C_{\al}$, $D_{\al}$. Since $A_0=C_{\al} - \al I_p$, we obtain that 
$$A_0^{k}=\sum_{i=1}^{p-k}E_{k+i,i}\in \La$$
for all $0\leq k<p$.  In particular, $E_{p1}=A_0^{p-1}\in\La$. Equality~\Ref{eq_Dak} implies that $D_{\al,k}\in\La$ for all $1\leq k < p$.

The statement of part (a) follows from the following claim:
\begin{eq}\label{claim1}
\{ \be^{p+k-r-1} E_{rk} \tq 1\leq r,k \leq p\}\subset \La. 
\end{eq}%
\noindent{}To prove the claim we use descending induction on $r$. 

Assume $r=p$. We have $E_{p1}\in\La$. Lemma~\ref{Epk.Dk} implies that $E_{p1} D_{\al,1} = \be E_{p2}$.  Since $E_{p1}$, $D_{\al,1}$ belong to $\La$,  we can see that $\be E_{p2}\in\La$. Similarly, the equality $\be E_{p2}D_{\al,2} = 2 \be^2 E_{p3}$ implies $\be^2  E_{p3}\in\La$. Repeating this reasoning we obtain that $\be^{k-1} E_{pk}\in\La$ for all $1\leq k\leq p$.

Assume that for some $1\leq r< p$ claim~\Ref{claim1} holds for all $r'>r$, i.e.,  for every $1\leq k\leq p$ we have $\be^{p+k-r'-1}  E_{r'k}\in\La$.  Since
$$\be^{p-r}\left(A_0^{r-1} - \sum_{k=2}^{p-r+1} E_{(r-1)+k,k}\right) = \be^{p-r} E_{r1},$$
we obtain $\be^{p-r} E_{r1}\in\La$. Lemma~\ref{Epk.Dk} implies that $\be^{p-r} E_{r1} D_{\al,1} = \be^{p+1-r} E_{r2}$.  Hence $\be^{p+1-r} E_{r2}\in\La$.  Repeating this reasoning we obtain that $\be^{p+k-r-1} E_{rk} \in\La$ for all $1< k\leq p$, since $\be^{p+k-r-2} E_{r,k-1} D_{\al,k-1} = (k-1)\, \be^{p+k-r-1} E_{r,k}$. Claim~\Ref{claim1} is proven. 

\medskip
\noindent{\bf (b)} Since $h(\al)$ is invertible in $\B$, part (a) implies that $\varepsilon_{\al}(\A_h(\B))=M_p(\B)$. Since $\varepsilon_{\al}$ is a homomorphism of $\FF$-algebras, the required is proven. 

\medskip
\noindent{\bf (c)} Consider a polynomial identity $f\in \FF\LA x_1,\ldots,x_m\RA$ for $\A_h(\B)$. Since $\FF$ is infinite, without loss of generality we can assume that $f$ is homogeneous with respect to the natural grading of $\FF\LA x_1,\ldots,x_m\RA$ by degrees, i.e., each monomial of $f$ has one and the same degree $t>0$. 
Part (a) implies that for every $A_1,\ldots,A_m$ from $M_p(\B)$ there exist $a_1,\ldots,a_m$ from $A_h(\B)$ such that
$$\be^{2(p-1)t} f(A_1,\ldots,A_m) = f(\be^{2(p-1)} A_1,\ldots,\be^{2(p-1)} A_m) = f(\varepsilon_{\al}(a_1),\ldots,\varepsilon_{\al}(a_m)).$$
Since $\varepsilon_{\al}$ is a homomorphism of $\FF$-algebras, we have $f(\varepsilon_{\al}(a_1),\ldots,\varepsilon_{\al}(a_m))=0$. Therefore  $f$ is a polynomial identity for $\A_h(\B)$ because $\be$ is not a zero divisor.

\end{proof}

To illustrate the proof of part (a) of Proposition~\ref{Id(Ah) subset Id(Mp)}, we repeat it in the partial case of $p=3$ in the following example. 

\begin{example}\label{ex1}
Assume $p = 3$ and $h(\al)\neq0$ for some $\al\in Z(\B)$. For short, denote $\be=h(\al)$, $\be'=h'(\al)$ and $\be''=h''(\al)$. Then $A_0=E_{21} + E_{32}$, 
$$ 		C_{\al}=\begin{pmatrix}
	\al& 0      & 0\\
	1     & \al & 0\\
	0     & 1      & \al \\
\end{pmatrix},\quad\text{ and }\quad 
D_{\al}=\begin{pmatrix}
	\be'&\be&0\\
	\be''& 2\be' & 2\be\\
	0&0&0
\end{pmatrix}.
	$$%
To show that $\La = \alg_{\B}\{I_p,C_{\al}, D_{\al}\}$ contains  $\be^4 M_3(\B)$, we consider the following elements of $\La$: 
$$  D_{\al,1}=D_{\al} - \be' I_3= 
\begin{pmatrix}
	0        & \be  & 0\\
	\be'' & \be' & 2 \be\\
	0        & 0       & -\be' \\
	\end{pmatrix},
	$$
$$  D_{\al,2}=D_{\al} - 2 \be' I_3 - \be'' A_0 = 
\begin{pmatrix}
	-\be' & \be  & 0\\
	0        & 0       & 2 \be\\
	0        & -\be'' & -2 \be' \\
	\end{pmatrix}.
	$$%
\noindent{}Note that $A_0=C_{\al} - \al I_3$ and $A_0^2=E_{31}$ belong to $\La$. Since 
$$E_{31} D_{\al,1} = \be E_{32}\;\;\text{ and }\;\; \be E_{32} D_{\al,2} = 2 \be^2 E_{33},$$ 
we obtain that $ \be E_{32},\, \be^2E_{33}\in\La$. Thus $ \be(A_0-E_{32})= \be E_{21}$ lies in $\La$. Since 
$$\be E_{21} D_{\al,1} = \be^2 E_{22}\;\;\text{ and }\;\; \be^2 E_{22} D_{\al,2} = 2 \be^3 E_{23},$$ 
we obtain 
that $ \be^2 E_{22},\,  \be^3 E_{23}\in\La$. Hence $ \be^2(I_3 - E_{22} - E_{33}) = \be^2 E_{11}$ lies in $\La$. Since 
$$\be^2 E_{11} D_{\al,1} = \be^3 E_{12}\;\;\text{ and }\;\; \be^3 E_{12} D_{\al,2} = 2 \be^4 E_{13},$$ we obtain 
that $\be^3 E_{12},\, \be^4 E_{13}\in\La$. Therefore, $\La$ contains $\be^4 M_3(\B)$.
\end{example}
\medskip


\begin{theo}\label{Teorema Principal}
Assume that $\FF$ is an infinite field of characteristic $p>0$.
\begin{enumerate}
\item[(a)] If $h(\al)$ is not a zero divisor for some $\al\in Z(\B)$, then $\A_h(\B)\eqPI M_p(\B)$.

\item[(b)] If $\B=\FF$, then $\A_h\eqPI M_p$.
\end{enumerate}
\end{theo}
\begin{proof} Part (a) follows from Corollary \ref{Id(Mp) subset Id(Ah)} and part (c) of Proposition \ref{Id(Ah) subset Id(Mp)}. 

Assume that $\B=\FF$. Then there exists $\al\in\FF$ with $h(\al)\neq0$, because $h\in\FF[x]$ is not zero and $\FF$ is infinite. Part (a) concludes the proof of part (b).
\end{proof}
\medskip

\begin{cor}\label{cor1}
Assume that $\FF$ is an infinite field of characteristic $p>0$ and for $h= \eta_d x^d + \eta_{d-1} x^{d-1} + \cdots + \eta_0$ from $Z(\B)[x]$ we have that $\eta_d$ and $\eta_0$ are not zero divisors. Then  $\A_h(\B)\eqPI M_p(\B)$.
\end{cor}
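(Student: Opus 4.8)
The plan is to observe that this corollary is an immediate specialization of Theorem~\ref{Teorema Principal}(a), so the only real work is to exhibit a single central element $\al\in Z(\B)$ at which $h$ takes a value that is not a zero divisor. The natural candidate is $\al=0$, which is central in any algebra, and for which $h(0)=\eta_0$.

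First I would check that the standing hypothesis Convention~\ref{conv1} is in force: since the leading coefficient of $h$ is $\eta_d$, which is assumed not to be a zero divisor, Convention~\ref{conv1} holds and all the preceding machinery (in particular the embedding of Lemma~\ref{lemma_embedding} and Proposition~\ref{Id(Ah) subset Id(Mp)}) is available. Next I would take $\al=0\in Z(\B)$ and compute $h(0)=\eta_d\cdot 0^d+\cdots+\eta_0=\eta_0$, which is not a zero divisor by the second hypothesis. Thus the hypothesis of Theorem~\ref{Teorema Principal}(a) is satisfied with this choice of $\al$, and that theorem yields $\A_h(\B)\eqPI M_p(\B)$ directly.

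There is essentially no obstacle here: the content of the corollary is entirely in recognizing that the constant term $\eta_0$ equals the evaluation $h(0)$ and that $0$ is a legitimate central element, so that the abstract existential condition ``$h(\al)$ is not a zero divisor for some $\al\in Z(\B)$'' in Theorem~\ref{Teorema Principal} is met. The two separate roles of the hypotheses should be made explicit in the write-up: the assumption on $\eta_d$ serves only to guarantee Convention~\ref{conv1}, while the assumption on $\eta_0$ is precisely what makes $\al=0$ an admissible evaluation point in Theorem~\ref{Teorema Principal}(a).
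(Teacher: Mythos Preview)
Your proposal is correct and matches the paper's treatment exactly: the paper states the corollary immediately after Theorem~\ref{Teorema Principal} with no proof, and your observation that $\al=0\in Z(\B)$ gives $h(0)=\eta_0$ not a zero divisor (so Theorem~\ref{Teorema Principal}(a) applies), while $\eta_d$ not a zero divisor secures Convention~\ref{conv1}, is precisely the intended one-line deduction.
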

\bigskip

\section{$\A_h$ over finite fields}\label{section_finite}

In this section we assume that $\B=\FF$ is the field of finite of order $q=p^k$ and $\FF\subset \KK$ for an infinite field $\KK$. Since $h\in\FF[x]$, Convention~\ref{conv1} is equivalent to the inequality $h\neq0$. As above, we write $M_p$ for $M_p(\FF)$ and $\A_h$ for $\A_h(\FF)$.   Given a $\KK$-algebra $\algA$, we write $\IdK{\algA}$ for the ideal of $\KK\LA X\RA$ of polynomial identities for $\algA$ over $\KK$. In this section we proof the next result.

\begin{theo}\label{theo_finite} 
\begin{enumerate}
\item[(a)]  $\IdK{M_p(\KK)} \bigcap \FF\LA X\RA \subset \Id{\A_h}$.

\item[(b)] $\Id{\A_h}  \subset \Id{M_p}$, if $h(\al)\neq0$ for some $\al\in\FF$.

\item[(c)] $\A_h  \not\eqPI M_p$.

\end{enumerate}
\end{theo}
\begin{proof}
Since $\A_h\subset \A_h(\KK)= \A_h \otimes_{\FF}\KK$ as $\FF$-algebras, we can see that 
$$\Id{\A_h \otimes_{\FF}\KK}\; = \; \IdK{\A_h(\KK)} \cap \FF\LA X\RA \; \subset \; \Id{\A_h}.$$%
Part (b) of Theorem~\ref{Teorema Principal} concludes the proof of part (a). Part (b) follows from part (b) of Proposition \ref{Id(Ah) subset Id(Mp)}. 

Consider  $F_{p,q}(x,y)=G_{p,q}(x)\, R_{p,q}(x,y)\, (y^{q}-y)$ of $\FF\LA x,y\RA$, where 
\begin{align*}
G_{p,q}(x)&=(x^{q^{2}}-x)(x^{q^{3}}-x)\cdots(x^{q^{p}}-x),\\
R_{p,q}(x,y)&= \left(1-(y\,({\rm ad}\, x)^{p-1})^{q-1}\right) 
\left(1-(y\,({\rm ad}\, x)^{p-2})^{q-1}\right) \cdots 
\left(1-(y\,{\rm ad}\, x)^{q-1}\right)
\end{align*}
for $y\,{\rm ad}\,x=[y,x]$. 
Genov~\cite{G. Genov} proved that $F_{p,q}(x,y)$ is a polynomial identity for $M_{p}$. 

Since $x\,{\rm ad}\,x = [x,x] = 0$, for $x\in\A_h$ we have $R_{p,q}(x,x)=1$ and  $$F_{p,q}(x,x)=(x^{q}-x)(x^{q^{2}}-x)(x^{q^{3}}-x)\cdots(x^{q^{p}}-x).$$%
By part (b) of Lemma~\ref{lemma_embedding} elements $x,x^2,x^3,\ldots $ are linearly independent in $\A_h$. Therefore, $F_{p,q}(x,x)\neq0$ in $\A_h$; part (c) is proven.
\end{proof}

\begin{conj}\label{conj} 
$\Id{M_p(\KK)} \bigcap \FF\LA X\RA = \Id{\A_h}$.
\end{conj}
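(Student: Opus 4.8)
The plan is to establish the only inclusion not already contained in Theorem~\ref{theo_finite}, namely $\Id{\A_h}\subseteq \IdK{M_p(\KK)}\cap\FF\LA X\RA$; the reverse inclusion is part~(a). First I would recast the conjecture as a statement about scalar extension. Applying part~(b) of Theorem~\ref{Teorema Principal} over the infinite field $\KK$ to the $\KK$-algebra $\A_h(\KK)=\A_h\otimes_\FF\KK$ gives $\IdK{\A_h(\KK)}=\IdK{M_p(\KK)}$. Since for $f\in\FF\LA X\RA$ the condition of vanishing on $\A_h(\KK)$ does not depend on whether $f$ is viewed in $\FF\LA X\RA$ or in $\KK\LA X\RA$, we have $\Id{\A_h(\KK)}=\IdK{\A_h(\KK)}\cap\FF\LA X\RA=\IdK{M_p(\KK)}\cap\FF\LA X\RA$. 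Hence the conjecture is equivalent to the single assertion $\Id{\A_h}\subseteq\Id{\A_h(\KK)}$, i.e.\ that passing from $\A_h$ to its scalar extension creates no new $\FF$-identities.

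The main obstacle is precisely that $\FF$ is finite, so the standard device that identities are unchanged under extension of an \emph{infinite} scalar field (Lemma~1.4.2 of~\cite{A.Gia}, as used in Corollary~\ref{Id(Mp) subset Id(Ah)}) is unavailable: over $\FF=\FF_q$ a nonzero polynomial can vanish on $\FF_q^{N}$ yet not on $\KK^{N}$. I would circumvent this using a central transcendental element of $\A_h$. Indeed, $\theta=x^{p}$ is central, since $[\y,x^{p}]=(x^{p})'h=p\,x^{p-1}h=0$ in characteristic $p$, and by Lemma~\ref{lemma_embedding} its powers $1,\theta,\theta^{2},\dots$ are $\FF$-linearly independent, so $\FF[\theta]$ is a polynomial ring contained in the centre of the domain $\A_h$.

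The technical core is the following claim: if $R$ is an $\FF$-domain containing a central element $\theta$ of infinite order, then $\Id{R}\subseteq\Id{R[t]}$ for a commuting variable $t$. To prove it, take $a_1,\dots,a_m\in R[t]$, write $a_i=\sum_{k}a_{ik}t^{k}$ with $a_{ik}\in R$, and expand $f(a_1,\dots,a_m)=\sum_{s=0}^{D}c_s t^{s}$ with fixed $c_s\in R$; the goal is $c_s=0$ for all $s$. Substituting the central element $t\mapsto\theta^{j}$ for $j=0,\dots,D$ turns this into $\sum_{s=0}^{D}c_s\,\theta^{js}=0$ in $R$, because each left-hand side is the value of $f$ on the elements $\sum_k a_{ik}\theta^{jk}\in R$ and hence vanishes when $f\in\Id{R}$. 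This is a linear system $V\vec c=0$ whose matrix $V=(\theta^{js})_{0\le j,s\le D}$ is Vandermonde with nodes $1,\theta,\dots,\theta^{D}$ and with entries in the central subring $\FF[\theta]$, so $\det V=\prod_{0\le s<s'\le D}(\theta^{s'}-\theta^{s})$ is a nonzero element of the domain $R$. Multiplying by the adjugate---legitimate since the entries of $V$ are central---yields $\det(V)\,c_s=0$, whence $c_s=0$ as $R$ is a domain. Thus $f\in\Id{R[t]}$.

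Finally I would iterate the claim along $R=\A_h,\ \A_h[t_1],\ \A_h[t_1,t_2],\dots$, each of which is again a domain with the same central element $\theta=x^{p}$ of infinite order, to obtain $\Id{\A_h}\subseteq\Id{\A_h\otimes_\FF\FF[t_1,\dots,t_r]}$ for every $r$. Any finite tuple from $\A_h(\KK)=\A_h\otimes_\FF\KK$ involves only finitely many scalars $\mu_1,\dots,\mu_r\in\KK$ and so lies in $\A_h\otimes_\FF\FF[\mu_1,\dots,\mu_r]$, which is the image of $\A_h\otimes_\FF\FF[t_1,\dots,t_r]$ under $t_j\mapsto\mu_j$; as identities pass to homomorphic images this gives $\Id{\A_h}\subseteq\Id{\A_h\otimes_\FF\FF[\mu_1,\dots,\mu_r]}$, so $f$ vanishes on the chosen tuple. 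Since the tuple was arbitrary, $\Id{\A_h}\subseteq\Id{\A_h(\KK)}$, which proves the conjecture. The crucial structural input throughout is the existence of the central element $x^{p}$ of infinite order, which is exactly what lets a Vandermonde elimination stand in for the multilinearization argument that fails over a finite field.
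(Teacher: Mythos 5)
This statement is labelled Conjecture~\ref{conj} in the paper and no proof of it is given there (Theorem~\ref{theo_finite} supplies only the inclusion $\IdK{M_p(\KK)}\cap\FF\LA X\RA\subset\Id{\A_h}$), so there is nothing of the authors' to compare your argument against; I can only judge it on its own terms, and I do not find a gap. Your reduction of the missing inclusion to $\Id{\A_h}\subseteq\Id{\A_h\otimes_\FF\KK}$ is exactly the reduction already implicit in the paper's proof of part~(a) of Theorem~\ref{theo_finite}, and the substance of your argument is the classical stability phenomenon: an algebra that is torsion-free over an infinite integral domain inside its centre has stable identities, so that scalar extension creates no new ones. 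Both structural inputs you need are available in the paper: $\A_h$ is a domain when $\B=\FF$ (Section~2, from the Ore-extension structure), and $x^p$ is central with pairwise distinct powers, since $[\y,x^p]=px^{p-1}h=0$ and the powers of $x$ are independent by Lemma~\ref{lemma_basis} (your citation of Lemma~\ref{lemma_embedding} serves equally well). Granting these, the Vandermonde elimination is sound: each evaluation $t\mapsto\theta^j$ is an algebra homomorphism because $\theta^j$ is central, the matrix $(\theta^{js})$ has entries in the commutative central subring $\FF[\theta]$ so the adjugate identity applies to the system with unknowns in $R$, and $\det V=\prod_{j<j'}\theta^{j}\bigl(\theta^{j'-j}-1\bigr)$ is nonzero in a domain, forcing $c_s=0$. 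The iteration to $\A_h[t_1,\dots,t_r]$ and the passage to $\A_h\otimes_\FF\FF[\mu_1,\dots,\mu_r]$ as a homomorphic image are routine. Your argument also explains, rather than contradicts, part~(c) of Theorem~\ref{theo_finite}: $M_p(\FF_q)$ has finite centre and hence admits unstable identities such as Genov's $F_{p,q}$, whereas $\A_h$ does not. Two small suggestions: state the key claim explicitly as the standard stability lemma for algebras torsion-free over an infinite central domain and cite it from the literature (it appears in this form in Rowen's book), and record explicitly that $R[t]$ is again a domain for noncommutative $R$ via the leading-coefficient argument, since that is what licenses the iteration.
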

\medskip

\section{Counterexample}\label{section_example}

In this section we consider a counterexample to show that without Convention~\ref{conv1} Theorems~\ref{theo0} and~\ref{Teorema Principal} do not hold. Namely, we consider the commutative algebra $\B\simeq\FF^2$ of double numbers from Example~\ref{ex2}, i.e., $\B$ has an $\FF$-basis $\{1,\zeta\}$ with $\zeta^2=0$, and set $h=\zeta$. Note that  Convention~\ref{conv1} does not hold for $h$. Then the statements of Theorems~\ref{theo0} and \ref{Teorema Principal} are not valid for  $\A_{\zeta}(\FF^2)=\A_h(\B)$  (see Proposition~\ref{prop_ex} below).

\begin{remark}\label{remark_ex}
If Convention~\ref{conv1} does not hold for $h$, then Lemmas~\ref{lemma_basis} and \ref{lemma_mult} are still valid for $\A_h(\B)$.
\end{remark}
\bigskip

Part (b) of Lemma~\ref{lemma_mult} together with Remark~\ref{remark_ex} implies that for all $i,j,r,s\geq 0$ we have
\begin{eq}\label{eq1_ex}
x^i \y^j\cdot x^r \y^s =  x^{i+r}\, \y^{s+j} + \zeta\, jr \, x^{i+r-1}\, \y^{s+j-1},
\end{eq}
where we use conventions that $x^{-1}=0$ and $y^{-1}=0$. Then
\begin{eq}\label{eq2_ex}
[x^i \y^j, x^r \y^s] =   \zeta\, (jr-is) \, x^{i+r-1}\, \y^{s+j-1} \;\text{ in }\; \A_{\zeta}(\FF^2).
\end{eq}

The unital finite dimensional Grassmann algebra $\G_k$ of rank $k$ has   an $\FF$-basis 
$$\{1,e_{i_1}\cdots e_{i_m} \tq 1\leq i_1<\cdots < i_m\leq k\}$$ 
and satisfies the defining relations $e_i^2=0$ and $e_i e_j = - e_j e_i$ for all $1\leq i,j\leq k$. The polynomial identities for $\G_k$ were described by  Di Vincenzo~\cite{DiVincenzo_1991} for $p=0$ and by Giambruno, Koshlukov~\cite{GiambrunoKoshlukov_2001} for any infinite field.


\begin{prop}\label{prop_ex} Assume that $\FF$ is an infinite field. 
\begin{enumerate}
\item[(a)] The T-ideal of identities $\Id{\A_{\zeta}(\FF^2)}$ is generated by $$ f_1=[[x_1,x_2],x_3],\quad   f_2=[x_1,x_2]\, [x_3,x_4].$$

\item[(b)] $\A_{\zeta}(\FF^2)\not\eqPI M_t(\C)$ for every $t\geq 2$ and every $\FF$-algebra $\C$ with unity.

\item[(c)] $\A_{\zeta}(\FF^2)\eqPI \G_k$ if and only if  $k\in\{2,3\}$. 
\end{enumerate}
\end{prop}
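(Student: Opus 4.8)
The plan is to prove the three parts in order, using the explicit multiplication formula~\Ref{eq2_ex} as the computational engine throughout. For part (a), I would first verify that $f_1$ and $f_2$ are genuinely identities of $\A_{\zeta}(\FF^2)$ by direct substitution, exploiting~\Ref{eq2_ex}: since every commutator $[x^i\y^j, x^r\y^s]$ carries an explicit factor of $\zeta$ and lowers total degree by one in each variable, the product $[a_1,a_2][a_3,a_4]$ of two commutators acquires a factor $\zeta^2=0$, forcing $f_2=0$; likewise $[[a_1,a_2],a_3]$ contains $\zeta^2=0$ because the inner commutator is already a $\B$-multiple of $\zeta$, and $[\zeta w, a_3]$ picks up another $\zeta$. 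This shows the T-ideal $I=\LA f_1,f_2\RA$ is contained in $\Id{\A_{\zeta}(\FF^2)}$. The harder direction is the reverse inclusion: I would pass to the relatively free algebra $\FX/I$, which (since $f_1$ makes all commutators central and $f_2$ kills products of commutators) is spanned over the commutative polynomial part by monomials that are products of variables times at most one commutator $[x_i,x_j]$. The task is then to show that any element of $\Id{\A_{\zeta}(\FF^2)}\setminus I$ would have to vanish under evaluation, contradicting an explicit evaluation; here I would set up generic substitutions $x_k \mapsto \al_k + \xi_k x + \eta_k \y$ and use~\Ref{eq2_ex} to read off the coefficient of $\zeta$, reducing the identity problem modulo $I$ to a statement about a commutative polynomial algebra, where multihomogeneity over the infinite field $\FF$ finishes the argument.

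For part (b), the key structural observation supplied by part (a) is that $\A_{\zeta}(\FF^2)$ satisfies the Engel-type identity $f_1=[[x_1,x_2],x_3]$, i.e.\ all commutators are central, together with $f_2$, i.e.\ the commutator ideal squares to zero. I would argue that $M_t(\C)$ for $t\geq 2$ cannot satisfy both: the matrix units furnish commutators that are neither central nor square-zero. Concretely, evaluating $f_1$ at $x_1\mapsto E_{12}$, $x_2\mapsto E_{21}$, $x_3\mapsto E_{12}$ (with the unity of $\C$ as scalars) gives $[[E_{12},E_{21}],E_{12}] = [E_{11}-E_{22}, E_{12}] \neq 0$ in $M_t(\C)$, so $f_1\notin\Id{M_t(\C)}$; since $f_1\in\Id{\A_{\zeta}(\FF^2)}$ by part (a), the two algebras cannot be PI-equivalent. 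I expect this to be the most routine of the three parts, being a single explicit matrix computation.

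For part (c), I would invoke the known descriptions of $\Id{\G_k}$ over an infinite field from~\cite{GiambrunoKoshlukov_2001} (and~\cite{DiVincenzo_1991} for $p=0$) and compare generators with the pair $\{f_1,f_2\}$ from part (a). The natural approach is to show that $\G_k$ also satisfies $f_1$ (commutators in the Grassmann algebra are central) and to determine exactly when it satisfies $f_2=[x_1,x_2][x_3,x_4]$: a product of two commutators in $\G_k$ lands in the square of the augmentation ideal generated by degree-$\geq 2$ elements, and one checks by evaluating at the generators $e_i$ that $[e_1,e_2][e_3,e_4]=(2e_1e_2)(2e_3e_4)=4e_1e_2e_3e_4$, which vanishes precisely when $k<4$, i.e.\ $k\in\{2,3\}$ (for $k=2,3$ there are not enough distinct generators to form $e_1e_2e_3e_4\neq 0$). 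Conversely, for $k\geq 4$ this element is nonzero, so $f_2\notin\Id{\G_k}$ and the T-ideals differ. The remaining point is to confirm that for $k\in\{2,3\}$ the T-ideal $\Id{\G_k}$ is \emph{exactly} $\LA f_1,f_2\RA$ and not larger; I expect this matching of generators to be the main obstacle, requiring either a direct appeal to the precise generating sets in~\cite{GiambrunoKoshlukov_2001} for small $k$ or a codimension/dimension comparison of the relatively free algebras in each fixed multidegree to rule out extra independent identities. The characteristic-$p$ subtleties of the Grassmann identities are where I would take the most care.
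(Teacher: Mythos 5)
Your overall strategy for part (a) coincides with the paper's: verify $f_1,f_2$ via the $\zeta$-factor carried by every commutator (so that $f_1$ and $f_2$ both acquire $\zeta^2=0$), then reduce an arbitrary multihomogeneous identity modulo the T-ideal $I=\LA f_1,f_2\RA$ to a normal form consisting of a commutative monomial plus terms with a single commutator in front, and kill the remaining coefficients by substitution. The paper's substitutions are simpler than your generic ones ($x_i\mapsto x$, $x_j\mapsto \y$, all other variables $\mapsto 1$, followed by the basis Lemma for $\A_{\zeta}(\FF^2)$), but the idea is the same. The genuine problems are in parts (b) and (c). In (b) your witness fails in characteristic $2$: $[[E_{12},E_{21}],E_{12}]=[E_{11}-E_{22},E_{12}]=2E_{12}$, which is zero when $\Char\FF=2$, a case the proposition must cover since only infiniteness of $\FF$ is assumed. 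You need either a characteristic-free evaluation such as $[[E_{12},E_{22}],E_{21}]=[E_{12},E_{21}]=E_{11}-E_{22}\neq 0$, or the route the paper takes: by Amitsur--Levitzki the minimal degree of an identity of $M_t(\FF)$ is $2t\geq 4$, and $\Id{M_t(\C)}\subset\Id{M_t(\FF)}$ because $\FF\subset\C$, so the degree-$3$ polynomial $f_1$ cannot be an identity of $M_t(\C)$.

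In part (c) you correctly locate the crux --- showing $\Id{\G_k}=\LA f_1,f_2\RA$ for $k\in\{2,3\}$ --- but leave it unexecuted, and this is exactly where the characteristic matters. The known generating sets are $f_1,f_2$ only when $p=0$ or $p=k=3$; when $p>k$ the T-ideal $\Id{\G_k}$ is generated by $f_1,{\rm St}_4$, so one must additionally check that ${\rm St}_4$ lies in $\Id{\A_{\zeta}(\FF^2)}$ (it does: ${\rm St}_4=[x_1,x_2]\circ[x_3,x_4]-[x_1,x_3]\circ[x_2,x_4]+[x_1,x_4]\circ[x_2,x_3]$ is a combination of products of commutators, hence follows from $f_1,f_2$ by part (a)) and, conversely, that $f_2\in\Id{\G_k}$ for $k\in\{2,3\}$. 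Your sketch never engages with the ${\rm St}_4$ generating set. Finally, the case $p=2$ must be set aside explicitly: there $\G_k$ is commutative for every $k$ while $\A_{\zeta}(\FF^2)$ is not (since $[\y,x]=\zeta\neq0$), and your criterion ``$4e_1e_2e_3e_4$ vanishes precisely when $k<4$'' is wrong in that case because $4=0$ makes it vanish for all $k$.
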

\begin{proof}  
\noindent{\bf (a)}  By $\FF$-linearity formula~\Ref{eq2_ex} implies that $[a,b]$ belongs to $\zeta\, \A_{\zeta}(\FF^2)$ for all $a,b\in \A_{\zeta}(\FF^2)$. Then $f_1,f_2\in \FF\LA X\RA$ are nontrivial polynomial identities for $\A_{\zeta}(\FF^2)$, since $\zeta^2=0$. Note that 
$$f_3 = [x_1,x_2]\,x_3\, [x_4,x_5] = [[x_1,x_2],x_3] [x_4,x_5] + x_3 [x_1,x_2] [x_4,x_5]\in \Id{\A_{\zeta}(\FF^2)}$$
follows from $f_1,f_2$. Denote by $I$ the T-ideal generated by $f_1,f_2$. 

Assume that $f=\sum_k \al_k w_k$ is a nontrivial identity for $\A_{\zeta}(\FF^2)$, where $\al_k\in\FF$ and $w_k\in\FF\LA x_1,\ldots,x_m\RA$ is a monomial. Since $\FF$ is infinite, we can assume that $f$ is multihomogeneous. i.e., there exists $\un{d}\in\NN^m$ with $\mdeg(w_k)=\un{d}$ for each $k$. We apply equalities  
$$u x_j x_i v = u x_i x_j v - u [x_i,x_j] v,$$
$$u [x_i,x_j] v = [x_i,x_j] u v - [[x_i,x_j], u] v,$$
where monomials $u,v$ can be empty and $i<j$, to monomials $\{w_k\}$ and then repeat this procedure.  Since $f_1,f_3\in \Id{\A_{\zeta}(\FF^2)}$, we finally obtain that there exist $g\in I$, $\al_0, \al_{ij}\in\FF$ such that
$$f = g + \al_0 x_1^{d_1}\cdots x_m^{d_m} + \sum_{1\leq i<j\leq m} \al_{ij} [x_i,x_j] x_1^{d_1}\cdots x_i^{d_i-1} \cdots x_j^{d_j-1} \cdots x_m^{d_m} \;\text{ in }\;\FF\LA X\RA.$$
Since $f(1,\ldots,1) = g(1,\ldots,1) + \al_0$, we obtain that $\al_0=0$. Consider $i<j$ with $d_i,d_j\geq1$.  Making substitutions $x_i\to x$, $x_j\to \y$, $x_l\to 1$ for each $l$ different from $i$ and $j$, we can see that $0=0 + \al_{ij} [x,\y] x^{d_i-1}\y^{d_j-1}$ in $\A_{\zeta}(\FF^2)$. Thus $-\al_{ij} \zeta x^{d_i-1}\y^{d_j-1}=0$  in $\A_{\zeta}(\FF^2)$. Lemma~\ref{lemma_basis} together with Remark~\ref{remark_ex} implies that $\al_{ij}=0$. Therefore, $f=g$ lies in $I$.

\medskip
\noindent{\bf (b)} Since $\FF\subset \C$, every polynomial identity for $M_t(\C)$ lies in $\Id{M_t(\FF)}$. By Amitsur--Levitzki Theorem~\cite{Amitsur_Levitzki} the minimal degree of a polynomial identity for $M_t(\FF)$ is $2t$. In particular, $f_1$ is not an identity for $M_t(\C)$. 

\medskip
\noindent{\bf (c)} Since $\G_k$ is commutative in case $p=2$ or $k=1$, we can assume that $p\neq 2$ and $k\geq2$. Note that 
\begin{eq}\label{eq3_ex}
f_2(e_1,e_2,e_3,e_4)=4e_1 e_2 e_3 e_4 \neq 0 \;\text{ in }\; \G_k \;\text{ for }\; k\geq4.
\end{eq}
Thus we can assume that $k\in\{2,3\}$. The T-ideal $\Id{\G_k}$ is generated by 
\begin{enumerate}
\item[$\bullet$] $f_1,f_2$ in case $p=0$ or $p=k=3$. 

\item[$\bullet$] $f_1,{\rm St}_4$ in case $p>k$, where $k\in\{2,3\}$. 
\end{enumerate}
Since 
$${\rm St}_4(x_1,x_2,x_3,x_4) = [x_1,x_2]\circ [x_3,x_4] - [x_1,x_3]\circ [x_2,x_4] + [x_1,x_4] \circ [x_2,x_3],$$
where $u\circ v$ stands for $uv+vu$, part (a) implies that ${\rm St}_4$ lies in $\Id{\A_{\zeta}(\FF^2)}$. On the other hand, we can see that $f_2$ is a polynomial identity for $\G_k$ when $k\in\{2,3\}$. Part (c) is proven.
\end{proof}


\begin{thebibliography}{99}

\bibitem{AD2}J. Alev, F. Dumas, {\it Invariants du corps de Weyl sous l’action de groupes finis} (French, with English summary), Communications in Algebra {\bf 25} (1997), no. 5, 1655--1672.

\bibitem{Amitsur_Levitzki}A.S. Amitsur,  J. Levitzki, {\it 
Minimal identities for algebras}, Proc. Amer. Math. Soc. {\bf 1} (1950), 449--463.

\bibitem{AVV}M. Awami, M. Van den Bergh, and F. Van Oystaeyen, {\it Note on derivations of graded rings and classification of differential polynomial rings}, Bull. Soc. Math. Belg. S\'er. A {\bf 40} (1988), no. 2, 175--183. Deuxi\`eme Contact Franco-Belge en Alg\`ebre (Faulx-les-Tombes, 1987). 

\bibitem{Bavula_1992}V.V. Bavula, {\it Generalized Weyl algebras and their representations} (Russian), Algebra i Analiz {\bf 4} (1992), no. 1, 75--97. English translation: St. Petersburg Math. J. {\bf 4} (1993), no. 1, 71--92.

\bibitem{Bavula_2020}V.V. Bavula, {\it Classification of simple modules of the Ore extension $K[X][Y; f\frac{d}{dX}]$}, Math. Comput. Sci. {\bf 14} (2020), 317--325. 
\bibitem{Bavula_2021_zero}V.V. Bavula, {\it Isomorphism problems and groups of automorphisms for Ore extensions K[x][y; $\delta$] (zero characteristic)}, arXiv: 2107.09401.

\bibitem{Bavula_2021_prime}V.V. Bavula, {\it Isomorphism problems and groups of automorphisms for Ore extensions K[x][y; $f\frac{d}{dx}$] (prime characteristic)}, arXiv: 2107.09977.

\bibitem{Benkart_Lopes_Ondrus_II}G. Benkart, S.A. Lopes, M. Ondrus, {\it A parametric family of subalgebras of the Weyl algebra II. Irreducible modules},  Recent developments in algebraic and combinatorial aspects of representation theory, 73--98, Contemp. Math., {\bf 602}, Amer. Math. Soc., Providence, RI, 2013.
	
\bibitem{Benkart_Lopes_Ondrus_I}G. Benkart, S.A. Lopes, M. Ondrus, {\it A parametric family of subalgebras of the Weyl algebra I. Structure and automorphisms}, Transactions of the American Mathematical Society {\bf 367} (2015), no.~3, 1993--2021.


\bibitem{Benkart_Lopes_Ondrus_III}G. Benkart, S.A. Lopes, M. Ondrus, {\it Derivations of a parametric family of subalgebras of the Weyl algebra}, Journal of Algebra {\bf 424} (2015), 46–-97.

\bibitem{DiVincenzo_1991}O.M. Di Vincenzo, {\it A note on the identities of the Grassmann algebras}, Unione Matematica Italiana. Bollettino. A. Serie VII, {\bf 5} (1991), no. 3, 307--315.


\bibitem{Askar_2004}A.S. Dzhumadil'daev, {\it $N$-commutators}, Comment. Math. Helv. {\bf 79} (2004), no. 3, 516--553.

\bibitem{Askar_2014}A.S. Dzhumadil'daev, {\it $2p$-commutator on differential operators of order $p$}, Lett. Math. Phys. {\bf 104} (2014), no. 7, 849--869,

\bibitem{Askar_Yeliussizov_2015}A.S. Dzhumadil'daev, D. Yeliussizov, {\it Path decompositions of digraphs and their applications to Weyl algebra}, Adv. in Appl. Math. {\bf 67} (2015), 36--54.

\bibitem{W1_2015}J.A. Freitas, P. Koshlukov, A. Krasilnikov, {\it 
$\ZZ$-graded identities of the Lie algebra $W_1$}, Journal of Algebra {\bf 427} (2015), 226--251.

\bibitem{G. Genov}G. Genov, {\it Basis for identities of a third order matrix algebra over a finite field}, Algebra Log. {\bf 20} (1981), 241--257.

\bibitem{GiambrunoKoshlukov_2001}A. Giambruno, P. Koshlukov, {\it  On the identities of the Grassmann algebras in characterisitc  $p>0$}, Israel Journal of Mathematics {\bf 122} (2001), 305--316.

\bibitem{A.Gia}A. Giambruno, M. Zaicev, {\it Polynomial identities and asymptotic methods}, Math. Surveys Monographs vol. 122, AMS, 2005.

\bibitem{K. R. Goodearl}K.R. Goodearl, R.B. Warfield  Jr., {\it An Introduction to Noncommutative Noetherian Rings}, 2nd edition, Cambridge University Press, 2004.

\bibitem{Lopes_Solotar_2019}S.A. Lopes, A. Solotar, {\it Lie structure on the Hochschild cohomology of a family of subalgebras of the Weyl algebra}, arXiv: 1903.01226.

\bibitem{Mishchenko_1989}S.P. Mishchenko, {\it Solvable subvarieties of a variety generated by a Witt algebra}, Math. USSR Sb. {\bf 64} (1989), no. 2, 415--426.

\bibitem{Razmyslov_book}Yu. Razmyslov, {\it Identities of algebras and their representations}, Transl. Math. Monogr., vol. 138, Amer. Math. Soc., Providence, RI, 1994.


	
	

	

	
	
	
	
	

\end{thebibliography}

\end{document}